\documentclass[final]{siamart190516}

\usepackage{amsfonts}
\usepackage{graphicx}
\usepackage{epstopdf}
\usepackage{algorithmic}
\usepackage{bbm}
\usepackage{enumerate}
\ifpdf
  \DeclareGraphicsExtensions{.eps,.pdf,.png,.jpg}
\else
  \DeclareGraphicsExtensions{.eps}
\fi

\newsiamremark{remark}{Remark}
\newsiamremark{hypothesis}{Hypothesis}
\crefname{hypothesis}{Hypothesis}{Hypotheses}
\newsiamthm{claim}{Claim}
\newsiamthm{assumption}{Assumption}
\headers{Existence of MPE}{M. Pitera and {\L}. Stettner}
\title{Existence of bounded solutions to multiplicative Poisson equations under mixing property\\
\medskip
{\scriptsize (Previous title: ''A note on Multiplicative Poisson Equation: developments in the span-contraction approach'')}}

 \author{
Marcin Pitera\thanks{Institute of Mathematics, Jagiellonian University, Krakow, Poland
  (\email{marcin.pitera@uj.edu.pl});  research supported by NCN grant 2020/37/B/ST1/00463.}
  \and
{\L}ukasz Stettner\thanks{Institute of Mathematics, Polish Academy of Sciences, Warsaw, Poland
  (\email{l.stettner@impan.pl});  research supported by NCN grant 2020/37/B/ST1/00463.}}

\usepackage{amsopn}

\usepackage{enumitem}

\newcommand{\1}{\mathbbm{1}}

\def\bP{\mathbb{P}}
\def\bE{\mathbb{E}}
\def\bR{\mathbb{R}}

\def\bN{\mathbb{N}}

\def\bC{\mathbb{C}}

\def\cP{\mathcal{P}}
\def\cE{\mathcal{E}} 

\makeatletter
\def\namedlabel#1#2{\begingroup
    #2%
    \def\@currentlabel{#2}%
    \phantomsection\label{#1}\endgroup
}
\makeatother

\newtheorem{example}{Example}[section]

\begin{document}

\maketitle

\begin{abstract}
In this paper we study the problem of Multiplicative Poisson Equation (MPE) bounded solution existence in the generic discrete-time setting. Assuming mixing and boundedness of the risk-reward function, we investigate what conditions should be imposed on the underlying non-controlled probability kernel or the reward function in order for the MPE bounded solution to always exists. In particular, we consolidate span-norm framework based results and derive an explicit sharp bound that needs to be imposed on the cost function to guarantee the bounded solution existence under mixing. Also, we study the properties which the probability kernel must satisfy to ensure existence of bounded MPE for any generic risk-reward function and characterise process behaviour in the complement of the invariant measure support. Finally, we present numerous examples and stochastic-dominance based arguments that help to better understand the intricacies that emerge when the ergodic risk-neutral mean operator is replaced with ergodic risk-sensitive entropy.
\end{abstract}

\begin{keywords}
Multiplicative Poisson equation, Poisson equation, risk sensitive criterion, risk sensitive control, long time horizon, entropic utility, constant average cost, ergodic utility
\end{keywords}

\begin{AMS}
90C40, 93E20, 60J35, 93C55
\end{AMS}

\section{Introduction}
The main goal of this paper is to discuss the conditions which imply existence of a {\it Multiplicative Poisson Equation} (MPE) bounded solution in a generic discrete-time setup with pre-imposed mixing, see \cite{BalMey2000,DiMSte2006,CavHer2009,CavHer2010} for the general context. The problem considered in this paper is related to the entropic long-run risk sensitive averaged (per unit of time) performance assessment and its ergodic properties, see \cite{BieCiaPit2013,ste2022,KonMey2003}. Namely, we consider an uncontrolled Markov chain framework with bounded risk-reward setup. Given a risk-reward function $g$ and Markov chain $(x_i)_{i\in\bN}$ with starting point $x=x_0$, we are interested in conditions which imply ergodic stability and existence of the time-averaged value
\begin{equation}\label{eq:dlgi}
\lim_{n\to\infty}\tfrac{\mu_x\left(\sum_{i=0}^{n-1}g(x_i)\right)}{n},
\end{equation}
where $\mu_x(\cdot)=\ln \bE_x[\exp(\cdot)]$ is the normalised entropic utility under measure $\bP_x$ linked to the starting point, see Section~\ref{S:introduction} for the exact setup. 

In the wider context, the objective value considered in \eqref{eq:dlgi} is linked to the {\it risk sensitive criterion} which is a key object in the risk sensitive stochastic control framework. In fact, MPE bounded solution existence and strong mixing statements are often a prerequisite in controlled environments, when the corresponding Bellman equations are analysed, see~\cite{DiMSte2006,BisPra2022,Cav2018,BisBor2022} and references therein. In particular, solutions  to MPE are crucial in the study of unbounded horizon risk sensitive stopping problems which in turn are the main tool to solve risk sensitive impulse control problems, see \cite{JelPitSte2020,PitSte2019}. The usefulness of MPE analysis could be linked to the fact that a bounded MPE solution allow us to change probability measure and obtain much simpler stopping impulse control problem, see e.g. \cite{JelPitSte2021}.

Using the span-contraction framework, we analyse conditions that should be imposed on the underlying probability kernel or the reward function in order for the MPE bounded solution to always exists. In particular, we recall the local contraction property and derive an explicit sharp bound that needs to be imposed on the risk-reward function to guarantee the bounded solution existence under mixing. This results could be easily extended to the controlled framework and complement multiple conditions stated in the literature which e.g. state that for sufficiently small risk-aversion the solution could be found or provide a non-sharp bound conditions, see e.g.~\cite{Ste1999, PitSte2016,KonMey2003} and references therein. Also, we study the properties which the probability kernel must satisfy to ensure existence of bounded MPE for any generic risk-reward function and characterise process behaviour in the complement of the invariant measure support. Our analysis allows to better understand the classification made in~\cite{CavHer2009} and its potential expansion to a generic state spaces and non-norm-like cost-reward functions, see \cite{Cav2018,Cav2009}. We also present numerous examples that help to better understand the intricacies that emerge when the risk-neutral mean operator is replaced with risk-sensitive entropy in the ergodic setup.

This paper is organised as follows: In Section~\ref{S:introduction} we state the general setup and formulate all assumptions core to our study. Next, in Section~\ref{S:local.contr} we recall the central result on which the span-contraction framework is built on, i.e. the local contraction property of the MPE operator. We also provide an extensive comment on the subtleties which might result in local contraction not becoming the global contraction.
In Section~\ref{S:predReward} we present one of the main results of this paper linked to a bound that should be imposed on the risk-reward function for the bounded MPE solution to always exists under mixing.
Then, in Section~\ref{MPE.always} we study the problem of existence of bounded MPE solution for generic reward function, characterise how the process should behave in the complement of the invariant set, and show that MPE solution is in fact typically expected to be unbounded. Finally, in Section~\ref{S:negative} we study how the geometric propagation is linked to MPE solution existence and provide a series of negative results which show many intricacies inherently build into the ergodic risk-sensitive problem, even in the uncontrolled case.

\section{Preliminaries}\label{S:introduction}
Let $(E, \cE)$ be a locally compact separable metric space endowed with metric $\rho$ and Borel $\sigma$-field ${\cal{E}}$ that corresponds to the {\it state space}. In particular, this covers denumerable and finite case, where, for simplicity, we often assume that $E=\bN$ or $E=\{1,2,\ldots,N\}$ for some $N\in\bN$. 
We use ${\cal{P}}(E)$ to denote the space of probability measures on $(E,\cE)$ and $\bC(E)$ to denote the space of probability transition kernels $\bP$, such that $\mathbb{P}(x,\cdot)\in {\cal{P}}(E)$, for $x\in E$, and $\bP$ satisfy the Feller property, i.e. the mapping
\begin{equation}
x \mapsto \mathbb{P} f(x):=\int_E f(y) \mathbb{P}(x,dy)
\end{equation}
is continuous for every $f\in C(E)$, where $C(E)$ denotes the space of continuous and bounded functions on $E$. For now, let us fix $\bP\in\bC(E)$. Then, for a fixed starting point $x\in E$ and $f\in C(E)$, the normalised entropic utility is given by
\[
\mu_{x}(f):=\ln\left(\int_{E}e^{f(y)}\mathbb{P}(x,dy)\right).
\]
For a fixed {\it reward function} $g\in C(E)$ we consider the {\it Multiplicative Poisson Equation} (MPE) given by
\begin{align}\label{eq:3:bellman.av}
w(x)=g(x)-\lambda+\mu_x(w),\quad x\in E,
\end{align}
for $w\in C(E)$ and $\lambda \in \bR$. If it exists, we call $(\lambda,w)$ a solution to \eqref{eq:3:bellman.av}. The associated MPE operator is given by
\begin{equation}\label{lcont}
Tf(x):=g(x) +\mu_x(f);
\end{equation}
note that \eqref{eq:3:bellman.av} could be simply restated as $Tw(x)=w(x)+\lambda$.

For simplicity, we consider the normalised MPE with positive risk-aversion parameter equal to $\gamma=1$, see~\cite{BiePli1999} for details. That saying, all results presented in this paper directly transfer to a generic risk-sensitive MPE with entropy given by $\mu_x^\gamma(f)=\frac{1}{\gamma}\mu_x(\gamma f)$, for $\gamma\neq 0$. Indeed, by rescaling values of $g$, $\lambda$ and $w$, we can normalise the problem for any $\gamma\neq 0$.

For completeness, let us recall that given a (bounded) MPE solution \eqref{eq:3:bellman.av} we can immediately recover the the long-run average reward constant. With slight abuse of notation, given an arbitrary random variable $X$, $x\in E$ and $\bP\in\bC(E)$, we use $\mu_x(X):=\ln\bE_{x}[e^{X}]$ to denote normalised entropic utility of $X$. Also, we use $(x_i)_{i\in\bN}$ to denote the corresponding Markov process with starting point $x_0=x$ and refer to
\[
\textstyle J_x((x_i)_{i\in\bN}):=\liminf_{n\to\infty}\frac{\mu_x\left(\sum_{i=0}^{n-1}g(x_i)\right)}{n}
\]
as time-averaged normalised long-run entropy for $(x_i)_{i\in\bN}$, see~\cite{BieCiaPit2013} and~\cite{BiePli2003} for economic context and connections to the risk sensitive criterion performance measure. Next proposition shows that if MPE solution \eqref{eq:3:bellman.av} exists, then we immediately get $\lambda=J_x((x_i)_{i\in\bN})$. In particular, note that this value does not depend on the starting point $x\in E$.

\begin{proposition}\label{pr:lambda.is.good} Let $\bP\in \bC(E)$ and $g\in C(E)$ be such that a bounded MPE solution $(w,\lambda)\in C(E)\times \bR$ exists. Then
\begin{equation}\label{e1}
\textstyle \sup_{x\in E}\left|\lambda - \frac{1}{n}  \mu_x\left(\sum_{i=0}^{n-1} g(x_i)\right)\right|\to 0,\quad n\to\infty.
\end{equation}
\end{proposition}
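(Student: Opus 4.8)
The plan is to convert the fixed-point identity $Tw=w+\lambda$ into an exact formula for the entropic value of the partial sums $S_n:=\sum_{i=0}^{n-1}g(x_i)$, and then to bound the resulting error term using only boundedness of $w$.

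First I would record two elementary facts about the operator $T$ from \eqref{lcont}. It is equivariant under adding constants: for $c\in\bR$ and $f\in C(E)$,
\[
T(f+c)(x)=g(x)+\ln\!\left(\int_E e^{f(y)+c}\,\bP(x,dy)\right)=Tf(x)+c,
\]
so, since $(w,\lambda)$ solves \eqref{eq:3:bellman.av}, i.e.\ $Tw=w+\lambda$, a straightforward induction gives $T^nw=w+n\lambda$ for every $n\ge1$. Independently, I would prove by induction on $n$ that for every $f\in C(E)$ and $x\in E$,
\[
T^nf(x)=\mu_x\!\Big(\textstyle\sum_{i=0}^{n-1}g(x_i)+f(x_n)\Big)=\ln\bE_x\!\Big[\exp\big(\textstyle\sum_{i=0}^{n-1}g(x_i)+f(x_n)\big)\Big].
\]
The case $n=1$ is just the definition of $T$ together with that of $\mu_x$. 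For the inductive step one writes $T^{n+1}f=T(T^nf)$, expands $T^nf(x_1)$ by the inductive hypothesis applied to the chain shifted by one step, and collapses the nested expectations via the Markov property and the tower property of conditional expectation; since $g$ and $f$ are bounded (they lie in $C(E)$), all exponentials are bounded, every expectation is finite, and the manipulations are legitimate.

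Taking $f=w$ and combining the two facts yields, for all $n\ge1$ and $x\in E$,
\[
w(x)+n\lambda=\mu_x\!\big(S_n+w(x_n)\big).
\]
Now set $M:=\|w\|_\infty<\infty$. Since $S_n-M\le S_n+w(x_n)\le S_n+M$ pointwise and $\mu_x$ is monotone and shifts by additive constants, one gets $|\mu_x(S_n+w(x_n))-\mu_x(S_n)|\le M$, hence $|n\lambda-\mu_x(S_n)|\le M+|w(x)|\le 2M$. Dividing by $n$ gives $\sup_{x\in E}\big|\lambda-\tfrac1n\mu_x(S_n)\big|\le 2M/n\to 0$ as $n\to\infty$, which is exactly \eqref{e1}.

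The only step requiring genuine care is the inductive identity for $T^nf$, where one must set up the canonical Markov chain associated with the kernel $\bP$ and apply the Markov/tower property correctly; the rest is elementary, and in fact the argument yields the explicit uniform rate $O(1/n)$ with constant $2\|w\|_\infty$.
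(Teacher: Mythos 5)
Your proposal is correct and follows essentially the same route as the paper: iterating the MPE (your inductive identity $T^n w(x)=\mu_x(S_n+w(x_n))=w(x)+n\lambda$ is exactly the paper's iterated equation), then using monotonicity and translation invariance of $\mu_x$ to obtain the uniform bound $2\|w\|/n$. You merely make explicit the Markov/tower-property induction that the paper compresses into the phrase ``iterating MPE equation.''
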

\begin{proof}
Iterating MPE equation \eqref{eq:3:bellman.av}, for any $x\in E$ and $n\in\bN$, we get
\[
\textstyle w(x)=\mu_x\left(\sum_{i=0}^{n-1}(g(x_i)-\lambda)+w(x_n)\right)  
\]
which implies, due to monotonicity and translation invariance of the entropic utility,
\begin{equation}\label{eq:lambda.is.good}
\textstyle \sup_{x\in E}\left|\lambda - \frac{1}{n}\mu_x\left(\sum_{i=0}^{n-1} g(x_i)\right)\right|\leq 2\frac{\|w\|}{n}.
\end{equation}
Taking the limit of \eqref{eq:lambda.is.good} we complete the proof of \eqref{e1}.
\end{proof}

The main goal of this paper is to investigate under what conditions the solution to \eqref{eq:3:bellman.av} exists. Namely, we pre-assume a mixing condition and assess what additional conditions needs to be imposed either on $g\in C(E)$ or $\bP\in \bC(E)$ to guarantee existence of solution to \eqref{eq:3:bellman.av}. We say that the transition kernel $\bP\in \bC(E)$ satisfies the {\it mixing condition} with $\Lambda\in (0,1)$ if
\begin{equation}\tag{A.1}\label{A.1}
\sup_{x,x'\in E}  \left|\mathbb{P}(x,B)-\mathbb{P}(x',B)\right|\leq \Lambda,\quad B\in\cE.
 \end{equation}
For brevity, with slight abuse of notation, we often refer to $\Lambda$ as the minimal constant for which \eqref{A.1} is satisfied. Assumption \eqref{A.1} could be relaxed by considering multi-step dynamics. Namely, we say that the transition kernel satisfies the {\it multi-step mixing condition} with $\Lambda\in (0,1)$ and $n\in\bN$ if
\begin{equation}\tag{A.1'}\label{A.1'}
 \sup_{x,x'\in E}  \left|\mathbb{P}_n(x,B)-\mathbb{P}_n(x',B)\right|\leq \Lambda,\quad B\in\cE,
 \end{equation}
where $\bP_n(x,\cdot)$ denotes the $n$-step iterated transition kernel. In the literature, conditions \eqref{A.1} and \eqref{A.1'} are sometimes replaced by a slightly stronger condition linked to {\it global minorization}. For completeness, we also state these conditions and sometimes provide direct alternative proofs. We say that the transition kernel $\bP\in \bC(E)$ satisfies the {\it global minorization} condition for $d>0$ if there exists a probability measure $\eta\in \cP(E)$, such that
\begin{equation}\tag{A.2}\label{A.2}
\inf_{x\in E}\mathbb{P}(x,B) \geq d\eta(B),\quad B\in \cal{E}.
\end{equation}
Similarly, we say the transition kernel satisfies the {\it multi-step global minorization} condition for $d>0$ and $n\in\bN$, if there exists a probability measure $\eta$, such that
\begin{equation}\tag{A.2'}\label{A.2'}
\inf_{x\in E}\mathbb{P}_n(x,B) \geq d\eta(B),\quad B\in \cal{E}.
\end{equation}
For more information about  conditions \eqref{A.1} and \eqref{A.2} we refer to \cite{DiMSte1999}. Essentially, \eqref{A.1} refers to the total variation distance while \eqref{A.2} states the existence of the minorisation measure which in turn is almost equivalent to the Doeblin condition; see Theorem 16.2.1 and Theorem 16.2.3 in \cite{MeynTweedie} or Section 7.3 in \cite{HerLas1999}. It should be noted that any of the conditions \eqref{A.1}, \eqref{A.1'}, \eqref{A.2} or \eqref{A.2'} imply the existence of the (unique) invariant measure $\nu\in\cP(E)$, see e.g. Remark 7.3.13 in \cite{HerLas1999}. For brevity, if not stated otherwise, given the transition kernel $\bP\in\bC(E)$ we always use $\nu$ to denote its unique invariant measure (assuming it exists). For completeness, we also present some simple relations for the stated conditions in Proposition~\ref{pr:assumption.relation}

 \begin{proposition}\label{pr:assumption.relation}
Let $\bP\in\bC(E)$. Then

\begin{enumerate}
    \item If \eqref{A.2'} holds for $d\in (0,1)$, then \eqref{A.1'} holds for $\Lambda=1-d$, for the same $n\in\bN$.
    \item  If \eqref{A.1} holds for $\Lambda\in (0,1)$, then \eqref{A.1'} holds for any $n\in\bN$ with $\Lambda^n$.
    \item If \eqref{A.1'} holds and the unique invariant measure $\nu$ has an atom then \eqref{A.2'} holds for some $n\in\bN$.
\end{enumerate}
 \end{proposition}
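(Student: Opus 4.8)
The plan is to establish the three items independently, since each rests on a different classical device.

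\emph{Item~1.} Here I would use the standard minorization splitting. Fixing $n$ and $d\in(0,1)$ from \eqref{A.2'}, observe that for each $x$ the set function $\bP_n(x,\cdot)-d\,\eta(\cdot)$ is a nonnegative finite measure of total mass $1-d$, so $\bP_n(x,\cdot)=d\,\eta+(1-d)Q_x$ for some $Q_x\in\cP(E)$. Then $\bP_n(x,B)-\bP_n(x',B)=(1-d)(Q_x(B)-Q_{x'}(B))$ for all $x,x',B$, and since $Q_x(B),Q_{x'}(B)\in[0,1]$ this has modulus at most $1-d\in(0,1)$, which is exactly \eqref{A.1'} with $\Lambda=1-d$.

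\emph{Item~2.} The mechanism here is submultiplicativity of the Dobrushin-type coefficient $\beta(\bQ):=\sup_{x,x'}\sup_{B}|\bQ(x,B)-\bQ(x',B)|$; note that \eqref{A.1} reads $\beta(\bP)\le\Lambda$, and that $\bP_n\in\bC(E)$ because the Feller property is preserved under composition (by induction, $f\in C(E)$ implies $\bP_n f=\bP(\bP_{n-1}f)\in C(E)$). For fixed $x,x'$ set $\mu:=\bP(x,\cdot)-\bP(x',\cdot)$, a signed measure with $\mu(E)=0$ and $\|\mu\|_{\mathrm{TV}}=\sup_B|\mu(B)|\le\Lambda$. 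Writing $\bP_n(x,B)-\bP_n(x',B)=\int\bP_{n-1}(y,B)\,\mu(dy)$ and splitting $\mu$ into its positive and negative parts, each renormalised to a probability measure, one bounds the right-hand side by $\|\mu\|_{\mathrm{TV}}\,\beta(\bP_{n-1})\le\Lambda\,\beta(\bP_{n-1})$. Taking suprema over $x,x',B$ gives $\beta(\bP_n)\le\Lambda\,\beta(\bP_{n-1})$, and induction from $\beta(\bP_1)=\beta(\bP)\le\Lambda$ yields $\beta(\bP_n)\le\Lambda^n$, i.e.\ \eqref{A.1'} with constant $\Lambda^n$.

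\emph{Item~3.} This is the step I expect to be the main obstacle; the idea is that an atom of $\nu$ supplies a Dirac minorising measure. Let \eqref{A.1'} hold for some $n$ and $\Lambda$, and let $a$ be an atom of $\nu$, $c:=\nu(\{a\})>0$. Applying item~2 to the kernel $\bP_n$ (whose one-step mixing constant is the assumed $\Lambda$) shows that $\bP_{kn}$ satisfies \eqref{A.1'} with $\Lambda^k$ for every $k\in\bN$; hence the numbers $\{\bP_{kn}(x,\{a\}):x\in E\}$ all lie in an interval of length at most $\Lambda^k$. Since $\nu$ is $\bP$-invariant it is $\bP_{kn}$-invariant, so $\int_E\bP_{kn}(y,\{a\})\,\nu(dy)=\nu(\{a\})=c$, and combining this with the interval bound forces $\inf_x\bP_{kn}(x,\{a\})\ge c-\Lambda^k$. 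Choosing $k$ with $\Lambda^k<c$ and setting $d:=c-\Lambda^k>0$, $\eta:=\delta_a$, we get $\bP_{kn}(x,B)\ge\bP_{kn}(x,\{a\})\,\1_{\{a\in B\}}\ge d\,\delta_a(B)$ for all $x$ and $B$, which is \eqref{A.2'} with $n$ replaced by $kn$. The only genuinely delicate points are the elementary ``fixed $\nu$-average $c$ inside an interval of length $\le\Lambda^k$'' estimate and invoking the invariance identity for the correct iterate $\bP_{kn}$ rather than $\bP_n$.
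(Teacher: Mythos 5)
Your argument is correct in all three items; items 1 and 2 are essentially the paper's own devices (for item 1 the paper bounds $\sup_x\bP(x,B)-\inf_x\bP(x,B)=1-(\inf_x\bP(x,B)+\inf_x\bP(x,B^c))\le 1-d$ directly rather than via the splitting $\bP_n(x,\cdot)=d\eta+(1-d)Q_x$, and for item 2 the paper runs the same submultiplicativity induction, composing the $n$-step difference with the one-step kernel and using the Hahn positive set, whereas you compose in the opposite order and normalise the Jordan parts --- cosmetic differences only). Item 3 is where you genuinely diverge: the paper assumes the atom $y$, invokes the uniform ergodic theorem of Hern\'andez-Lerma--Lasserre, $\sup_{x\in E}|\bP_n(x,B)-\nu(B)|\to 0$, and takes $n$ large enough that $\inf_x\bP_n(x,\{y\})\ge\nu(\{y\})/2$, so the key quantitative input is cited; you instead treat $\bP_n$ as a one-step kernel with mixing constant $\Lambda$, apply your item 2 to get oscillation at most $\Lambda^k$ for $x\mapsto\bP_{kn}(x,\{a\})$, and combine this with the invariance identity $\int_E\bP_{kn}(y,\{a\})\,\nu(dy)=\nu(\{a\})=c$ to conclude $\inf_x\bP_{kn}(x,\{a\})\ge c-\Lambda^k>0$ for $k$ large. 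This makes the proof self-contained (no appeal to an external ergodic theorem), gives an explicit minorisation constant $d=c-\Lambda^k$ with an explicit iterate $kn$, and handles the multi-step hypothesis \eqref{A.1'} directly, whereas the paper only writes out the one-step case ``for simplicity''; the paper's route, in exchange, is shorter and reuses a standard convergence result it needs elsewhere anyway. Your delicate points (the oscillation-plus-average estimate and using invariance of $\nu$ under $\bP_{kn}$ rather than $\bP_n$) are both handled correctly.
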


 \begin{proof}
 (1.) For simplicity we only show the proof for \eqref{A.1} and \eqref{A.2}. Let us assume $\bP\in \bC(E)$ satisfy \eqref{A.2} for $d\in (0,1)$ and $\eta\in \cP(E)$. Then, for any $B\in \cE$, we get
 \begin{align*}
\sup_{x,x'\in E}  \left|\mathbb{P}(x,B)-\mathbb{P}(x',B)\right| &= \sup_{x\in E} \mathbb{P}(x,B)-\inf_{x\in E} \mathbb{P}(x,B)\\
& =1-\left(\inf_{x\in E} \mathbb{P}(x,B^c)+\inf_{x\in E} \mathbb{P}(x,B)\right)\\
& \leq 1-d.
\end{align*}
(2.) Let us assume that $\bP\in \bC(E)$ satisfy \eqref{A.1} for $\Lambda\in (0,1)$. We know that \eqref{A.1'} is satisfied for $n=1$. Let us now show that if $\eqref{A.1'}$ is satisfied for $n\in\bN$ and $\Lambda^{n}$ then $\eqref{A.1'}$ is satisfied for $n+1$ and $\Lambda^{n+1}$. Let us fix $x,x'\in E$ and let $H\in \cE$ denote the positive set from the  Hahn-Jordan decomposition applied to the signed measure $\nu_{x,x'}:=(\bP_n(x,\cdot)-\bP_n(x',\cdot))$. Then, for any $B\in\cE$, we get
\begin{align*}
\bP_{n+1}(x,B)-\bP_{n+1}(x',B) &= \int_{E}\bP(z,B)\nu_{x,x'}(dz)\\
&\leq\int_{H}\sup_{z\in E}\bP(z,B)\nu_{x,x'}(dz)+\int_{H^c}\inf_{z\in E}\bP(z,B)\nu_{x,x'}(dz)\\
& =\left[\sup_{z\in E}\bP(z,B)-\inf_{z\in E}\bP(z,B)\right]\int_{H}\nu_{x,x'}(dz)\\
& \leq \Lambda |\bP_n(x,H)-\bP_n(x',H)|\\
& \leq \Lambda^{n+1}
\end{align*}
Similarly, we get $\bP_{n+1}(x,B)-\bP_{n+1}(x',B)\geq -\Lambda^{n+1}$, which concludes the proof.

\noindent (3.) For simplicity, we only show the proof for \eqref{A.1}. From \cite{HerLas1999}, we know that for any $B\in\cE$ we have
\[
\sup_{x\in E}|\bP_n(x,B)-\nu(B)|\to 0, \quad n\to\infty.
\]
Let us assume there exists $y\in E$ such that $\nu(\{y\})>0$. Then, for sufficiently big $n\in \bN$, we get  $\sup_{x\in E}|\bP_n(x,\{y\})-\nu(\{y\}|)< \mu(\{y\})/2$ and consequently
\begin{align*}
\inf_{x\in E}\bP_n(x,\{y\}) &= \inf_{x\in E}(\bP_n(x,\{y\})-\nu(\{y\})) +\nu(\{y\})\\
&\geq \nu(\{y\})-\sup_{x\in E}|\bP_n(x,\{y\})-\nu(\{y\})|\\
& \geq \nu(\{y\})/2.
\end{align*}
which concludes the proof, as we can simply set $\eta(\{y\}):=1$ and $d:=\nu(\{y\})/2$.
\end{proof}
In particular, note that the second condition stated in 3. from Proposition~\ref{pr:assumption.relation} holds automatically if $E$ is denumerable, so that in this case \eqref{A.1'} and \eqref{A.2'} are effectively equivalent. That saying, note that condition \eqref{A.2} is stronger than \eqref{A.1}, see Example~\ref{ex:A2vsA1}.

\begin{example}[One-step Mixing does not imply one-step uniform ergodicity]\label{ex:A2vsA1}
Let $E=\{1,2,3\}$ and consider the transition matrix \[
P:=\begin{bmatrix}
0 & \tfrac 12 & \tfrac 12\\
\tfrac 12 & 0& \tfrac 12\\
\tfrac 12 & \tfrac 12 & 0
\end{bmatrix}.
\]
It can be easily checked that the transition kernel linked to matrix $P$ satisfies \eqref{A.1} with $\Lambda=\tfrac{1}{2}$ but does not satisfy \eqref{A.2} since zero-diagonal entries imply that the  minorization measure $\eta\in \cP(E)$ from $\eqref{A.2}$ must be non-positive for all states.\end{example}

For simplicity, in this paper we state most of the results only for \eqref{A.1} having it mind they are also true for \eqref{A.2} and could be easily generalised to multi-step framework by considering \eqref{A.1'} or \eqref{A.2'}.

\section{Local contraction property under ergodic assumptions}\label{S:local.contr}

In this section we state and prove an important result stating that for any $\bP\in \bC(E)$ satisfying assumption \eqref{A.1}, the operator $T$ defined in \eqref{lcont} is a local contraction in a suitable norm; this is an essential result for the span-contraction approach. For any $f\in C(E)$ we introduce the supremum norm $\|\cdot\|$ and the linked span semi-norm $\|\cdot\|_{\textrm{sp}}$ that are given by
\[
\|f\|:=\sup_{x\in E}|f(x)|\quad\textrm{and}\quad \|f\|_{\textrm{sp}}:=\frac{\sup_{x\in E} f(x)- \inf_{y\in E} f(y)}{2}.
\]
 Note that for any $f\in C(E)$ those two norms are linked by relation 
 \begin{equation}\label{eq:norm.rel}
 \inf_{c\in\bR}\|f+c\|=\|f\|_{\textrm{sp}},
 \end{equation}
 i.e. the span norm could be seen as the supremum norm for centered function $f$; see Proposition 2 in \cite{PitSte2016} for details. In particular, since the function $w$ in \eqref{eq:3:bellman.av} is defined up to an additive contant, it is often more convenient to use span semi-norm rather than the supremum norm. Moreover, for any signed measure $\mu:=\mu_1-\mu_2$, where $\mu_1,\mu_2\in\cP(E)$, we define the total variation norm of $\mu$ by
\[
\|\mu \|_{\textrm{var}}:=\frac{1}{2}\int_{E}|\mu|(dy)=\sup_{B\in\cE}|\mu_{1}(B)-\mu_{2}(B)|,
\]
where $|\mu|:=\1_{A}\mu - \1_{A^c}\mu$, and $A\in \cE$ is the positive set for $\mu$ that is obtained using Hahn-Jordan decomposition; see \cite{HerLas1996} for details. Also, we recall that for any bounded and measurable function $\varphi\colon E\to\bR$ and $\nu\in\cP(E)$, the corresponding entropic utility admits the dual (robust) representation, i.e. we get
\begin{equation}\label{eq:3:baseq}
\ln \left(\int_E e^{\varphi(x)}\nu(dx)\right)=\sup_{\mu \in {\cal{P}}(E)} \left[\int_E \varphi(x)\mu(dx) - \mathbb{H}[\mu\|\nu]\right],
\end{equation}
where, for any $\mu,\nu\in\cP(E)$,
\begin{equation}
\mathbb{H}[\nu\,\|\,\mu]=
\begin{cases}
\int_E \ln \left(\frac{d\nu}{d \mu}(x)\right)\nu(dx) & \textrm{if } \nu\ll \mu,\\
+\infty &\textrm{otherwise},
\end{cases}
\end{equation}
is the relative entropy of $\mu$ with respect to $\nu$. We are now ready to present a theorem which is a central tool of the risk sensitive span-contraction framework following \cite{DiMSte1999}.

\begin{theorem}\label{loccontr}
Assume $\bP\in \bC(E)$ satisfies \eqref{A.1} for $\Lambda\in (0,1)$ and let $g\in C(E)$. Then, operator $T$ defined in \eqref{lcont} is a local contraction in the span norm, i.e. there exists $\alpha\colon \bR_{+}\to (0,1)$ such that 
\[
\|Tf_1-Tf_2\|_{\textrm{sp}}\leq \alpha(M)\, \|f_1-f_2\|_{\textrm{sp}},
\]
for any $f_1,f_2\in \mathbb{C}(E)$ satisfying   $\|f_1\|_{\textrm{sp}}\leq M$  and $\|f_2\|_{\textrm{sp}}\leq M$. 
\end{theorem}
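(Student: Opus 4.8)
The overall strategy is to observe that the reward $g$ plays no role: it cancels in every difference $Tf_1-Tf_2$, so one only needs to control the entropic increment $\mu_x(f_1)-\mu_x(f_2)$, and to write this increment as an average of $f_1-f_2$ against a one‑parameter family of exponentially tilted versions of $\bP(x,\cdot)$ whose pairwise total variation distances are bounded uniformly in the starting points by $\Lambda$ and $M$. Concretely, set $u(x):=Tf_1(x)-Tf_2(x)=\mu_x(f_1)-\mu_x(f_2)$; by \eqref{eq:norm.rel} it suffices to estimate $\|u\|_{\textrm{sp}}=\tfrac12\sup_{x,x'\in E}(u(x)-u(x'))$.

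For a fixed $x\in E$ I would put $h:=f_1-f_2$, $q_t:=(1-t)f_2+tf_1$ for $t\in[0,1]$, and $\varphi(t):=\mu_x(q_t)$. Boundedness of $h$ and $q_t$ lets one differentiate under the integral sign, giving $\varphi\in C^1$ with $\varphi'(t)=\int_E h\,d\bP^{q_t}_x$, where $\bP^{q}_x(dy):=e^{q(y)}\bP(x,dy)\big/\!\int_E e^{q(z)}\bP(x,dz)\in\cP(E)$ is the exponential tilt of $\bP(x,\cdot)$ by $q$. Integrating, $u(x)=\varphi(1)-\varphi(0)=\int_0^1\!\int_E h\,d\bP^{q_t}_x\,dt$, and hence
\[
u(x)-u(x')=\int_0^1\int_E h\,d\big(\bP^{q_t}_x-\bP^{q_t}_{x'}\big)\,dt .
\]

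The core is a uniform total variation bound on these tilted kernels. For each $t$ the signed measure $\bP^{q_t}_x-\bP^{q_t}_{x'}$ has zero total mass, so one may subtract any constant from $h$, which gives $\big|\int_E h\,d(\bP^{q_t}_x-\bP^{q_t}_{x'})\big|\le 2\|h\|_{\textrm{sp}}\|\bP^{q_t}_x-\bP^{q_t}_{x'}\|_{\textrm{var}}$. Since a tilt depends on $q_t$ only up to an additive constant and $\|q_t\|_{\textrm{sp}}\le(1-t)\|f_2\|_{\textrm{sp}}+t\|f_1\|_{\textrm{sp}}\le M$ by sublinearity of the span seminorm, one may assume $-M\le q_t\le M$. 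Then, with $\sigma:=\bP(x,\cdot)\wedge\bP(x',\cdot)$, assumption \eqref{A.1} gives $\sigma(E)=1-\|\bP(x,\cdot)-\bP(x',\cdot)\|_{\textrm{var}}\ge1-\Lambda$, and since $\bP^{q_t}_x\ge e^{q_t}\sigma/\!\int e^{q_t}\,d\bP(x,\cdot)\ge e^{-M}e^{q_t}\sigma$ (and likewise at $x'$), the shared part obeys $(\bP^{q_t}_x\wedge\bP^{q_t}_{x'})(E)\ge e^{-M}\int e^{q_t}\,d\sigma\ge(1-\Lambda)e^{-2M}$, i.e. $\|\bP^{q_t}_x-\bP^{q_t}_{x'}\|_{\textrm{var}}\le\alpha(M):=1-(1-\Lambda)e^{-2M}$. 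Feeding this back into the displayed identity yields $|u(x)-u(x')|\le 2\alpha(M)\|h\|_{\textrm{sp}}$ uniformly in $x,x'$, hence $\|Tf_1-Tf_2\|_{\textrm{sp}}\le\alpha(M)\|f_1-f_2\|_{\textrm{sp}}$, and $\alpha(M)\in[\Lambda,1)\subset(0,1)$ because $\Lambda\in(0,1)$.

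I expect the tilt bound in the third paragraph to be the delicate step: one must verify that multiplying $\bP(x,\cdot)$ and $\bP(x',\cdot)$ by $e^{q_t}$ does not destroy the minorization coming from \eqref{A.1}, and track the $e^{\pm M}$ factors carefully so that the resulting constant is genuinely below $1$. The structural observation that makes the estimate uniform in $t$ — and hence usable for a local contraction — is that the whole segment $q_t=(1-t)f_2+tf_1$ stays in the span‑ball of radius $M$. An alternative derivation of the tilt bound goes through the dual representation \eqref{eq:3:baseq}, but the coupling argument above has the advantage of producing the explicit constant $\alpha(M)=1-(1-\Lambda)e^{-2M}$.
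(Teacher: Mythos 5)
Your proof is correct, and it takes a genuinely different route from the paper. The paper starts from the dual (robust) representation \eqref{eq:3:baseq}: it evaluates the Esscher-transform maximizers at crossed arguments $(x_1,f_2)$, $(x_2,f_1)$ to obtain $\|Tf_1-Tf_2\|_{\textrm{sp}}\le \|f_1-f_2\|_{\textrm{sp}}\sup_{x,x'}\|\mu_{(x,f_1)}-\mu_{(x',f_2)}\|_{\textrm{var}}$, and then proves that this supremum over the span-ball of radius $M$ is strictly below $1$ by contradiction: a sequence with variation distance tending to $1$ would force $\bP(x_n,B_n)-\bP(x'_n,B_n)\to 1$, violating \eqref{A.1}. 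That argument yields existence of $\alpha(M)<1$ but no explicit value. You instead interpolate $q_t=(1-t)f_2+tf_1$, represent $\mu_x(f_1)-\mu_x(f_2)$ as $\int_0^1\int h\,d\bP^{q_t}_x\,dt$, and bound the total-variation distance of two tilted kernels directly by a coupling/overlap argument: \eqref{A.1} guarantees overlap mass at least $1-\Lambda$ between $\bP(x,\cdot)$ and $\bP(x',\cdot)$, and a tilt by a function with span at most $M$ (centered so $|q_t|\le M$) can shrink that overlap by at most $e^{-2M}$, giving the explicit constant $\alpha(M)=1-(1-\Lambda)e^{-2M}$. All the delicate points check out: the differentiation under the integral sign is justified by boundedness, the zero-mass trick correctly produces the factor $2\|h\|_{\textrm{sp}}\|\cdot\|_{\textrm{var}}$ with the paper's normalization of the variation norm, the tilt's invariance under additive constants legitimizes the centering, and the identity $\|\mu_1-\mu_2\|_{\textrm{var}}=1-(\mu_1\wedge\mu_2)(E)$ for probability measures closes the estimate. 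What your approach buys is a quantitative, monotone-in-$M$ contraction constant (useful for the boundedness arguments elsewhere in the paper, e.g. Theorem~\ref{th:sharp1}); what the paper's approach buys is that it handles the two functions $f_1,f_2$ at two different points symmetrically via convex duality, a pattern that generalizes more readily to controlled or multi-step settings where an interpolation along the segment may be less convenient.
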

\begin{proof}
Let us fix $f_1,f_2\in C(E)$ and $x_1, x_2\in E$. For any $x\in E$ and $f\in C(E)$ we define the {\it Esscher transform} measure $\mu_{(x,f)}\in \cP(E)$ as
\begin{equation}\label{eq:3:mu.def}
\mu_{(x,f)}(B):=\frac{\int_B e^{f(z)}\mathbb{P}(x,dz)}{ \int_E
e^{f(z)}\mathbb{P}(x,dz)},\quad B\in \cE.
\end{equation}
Then, from the dual representation of entropic risk measure \eqref{eq:3:baseq} we get
\begin{align}
Tf_1(x_1) &\geq g(x_1) +\int_E f_1(y)\mu_{(x_1,f_2)}(dy)
-\mathbb{H}\left[ \mu_{(x_1,f_2)}\,\|\, \mathbb{P}(x_1,\cdot)\right],\nonumber\\
Tf_2(x_2) &\geq g(x_2) +\int_E f_2(y)\mu_{(x_2,f_1)}(dy)
-\mathbb{H}\left[ \mu_{(x_2,f_1)}\,\|\, \mathbb{P}(x_2,\cdot)\right].\label{inne}
\end{align}
Recalling \eqref{lcont} and taking into account that $\mu_{(x_1,f_2)}$ and $\mu_{(x_2,f_1)}$ are entropy maximising measures for parameter choices $(x_1,f_2)$ and $(x_2,f_1)$, respectively, we get 
\begin{align}
Tf_1(x_2) &\leq g(x_2) +\int_E f_1(y)\mu_{(x_2,f_1)}(dy)
-\mathbb{H}\left[ \mu_{(x_2,f_1)}\,\|\, \mathbb{P}(x_2,\cdot)\right],\nonumber\\
Tf_2(x_1) &\leq g(x_1) +\int_E f_2(y)\mu_{(x_1,f_2)}(dy)
-\mathbb{H}\left[ \mu_{(x_1,f_2)}\,\|\, \mathbb{P}(x_2,\cdot)\right].\label{inne2}
\end{align}
Thus, combining \eqref{inne} with \eqref{inne2}, and  setting $\mu :=\mu_{(x_2,f_1)}-\mu_{(x_1,f_2)}$, we get
\begin{equation}\label{eq:3:decomp1}
Tf_1(x_1)-Tf_2(x_1) -\bigl(Tf_1(x_2)-Tf_2(x_2)\bigr) \geq \int_E \bigl(f_1(y) -f_2(y)\bigr)\mu(dy).
\end{equation}
Let $\Gamma$ denote the Hahn-Jordan decomposition (positive) set for the signed measure $\mu$. Then,
\begin{align}
\int_E \bigl(f_1(y) -f_2(y)\bigr)\mu(dy) & \leq \sup_{y\in
E}\bigl(f_1(y)-f_2(y)\bigr)\mu(\Gamma) +\inf_{y\in E} \bigl(f_1(y) -f_2(y)\bigr)\mu(\Gamma^c)\nonumber\\
& \leq \sup_{y\in
E}\bigl(f_1(y)-f_2(y)\bigr)\mu(\Gamma) -\inf_{y\in E} \bigl(f_1(y) -f_2(y)\bigr)\mu(\Gamma)\nonumber\\
&=2\|f_1-f_2\|_{\textrm{sp}}\mu(\Gamma) =2\|f_1-f_2\|_{\textrm{sp}}\|\mu\|_{\textrm{var}}.\label{eq:3:decomp2}
\end{align}
Combining \eqref{eq:3:decomp1} with \eqref{eq:3:decomp2} and taking supremum over $x_1\in E$ and $x_2\in E$ we get
\begin{equation}
\|Tf_1-Tf_2\|_{\textrm{sp}}\leq \|f_1-f_2\|_{\textrm{sp}}\sup_{x,x'\in E}
\|\mu_{(x,f_1)}-\mu_{(x',f_2)}\|_{\textrm{var}},
\end{equation}
Now, we are going to show that for any $M>0$ we get
\begin{equation}\label{in1}
\alpha(M):=\sup_{\substack{f_1\in C(E):\\ \|f_1\|_{\textrm{sp}\leq M}}}\sup_{\substack{f_2\in C(E):\\ \|f_2\|_{\textrm{sp}\leq M}}} \left(\sup_{x,x'\in E}
\|\mu_{(x,f_1)}-\mu_{(x',f_2)}\|_{\textrm{var}}\right) <1.
\end{equation}
Assuming that \eqref{in1} is not true, there exists a sequence of objects $(f_{1n},f_{2n},B_n,x_n,x'_n)$, $n\in\bN$, where $f_{1n},f_{2n}\in C(E)$, $\|f_1\|_{\textrm{sp}}\leq M$, $\|f_2\|_{\textrm{sp}}\leq M$, $B_n\in \cE$, and $x_n,x'_n\in E$ are such that
\[
\left(\mu_{(x_n,f_{1n})}-\mu_{(x'_n, f_{2n)}}\right)(B_n)\to
1\quad\hbox{as}\quad n\to\infty.
\]
In particular, this implies $\mu_{(x_n, f_{1n})}(B^c_n)\to 0$
and $\mu_{(x'_n, f_{2n})}(B_n)\to 0$, as $n\to\infty$. Now, from definition \eqref{eq:3:mu.def} it follows that
\[
e^{-\|f\|_{\textrm{sp}}}\mathbb{P}(x,B)\leq\mu_{(x,f)}(B),
\]
which in turn implies $\mathbb{P}(x_n,B^c_n)\to 0$ and $\mathbb{P}(x'_n,B_n)\to 0$, as $n\to\infty$.
Consequently, we get
\[
\lim_{n\to\infty} \left[\mathbb{P}(x_n,B_n) -\mathbb{P}(x'_n,B_n)\right]=1,
\]
which directly contradicts assumption \eqref{A.1}.
\end{proof}
\begin{remark}[Local contraction for bounded measurable functions]
Note that in general we also have a local contraction property in the space $\mathbb{B}(E)$ of bounded measurable functions with supremum norm. Furthermore, the constant $\alpha(M)$ does not depend on $g\in \mathbb{B}(E)$.
\end{remark}
From Theorem~\ref{loccontr} one can conclude that if the sequence of iterated MPE operators $(T^n0)_{n\in\bN}$ is bounded in the span norm, then we can apply standard Banach's fixed point arguments to get solution to \eqref{eq:3:bellman.av}. Unfortunately, this is not always the case, i.e. we could get
\[
\sup_{n\in\bN}\|T_n(0)\|_{\textrm{sp}}=\infty
\]
and there might be no bounded solution to \eqref{eq:3:bellman.av} under \eqref{A.1}. For completeness, let us introduce a simplified discrete state space example.

\begin{example}\label{ex:3:ergodic.not} (No generic solution to MPE under ergodicity assumption)
Let $E:=\{x_1,x_2\}$, $\Lambda\in (0,1)$, and let the transition matrix by given by
\[
P=\begin{bmatrix}
1 & 0\\
1-\Lambda & \Lambda
\end{bmatrix}.
\]
Then, the solution to MPE for $g\in C(E)$ exists if and only if $g(x_1)>g(x_2)$ or 
$\|g\|_{\textrm{sp}}<-\ln \sqrt{\Lambda}$.
\end{example}
\begin{proof}
Note that both \eqref{A.1} and \eqref{A.2} are satisfied. Under the assumed dynamics, the MPE could be restated as
\begin{equation}\label{eq:ex1}
\begin{cases}
w(x_1)+\lambda=g(x_1)+w(x_1),\\
w(x_2)+\lambda=g(x_2)+\ln\left[ (1-\Lambda) e^{w(x_1)}+\Lambda e^{w(x_2)}\right].
\end{cases} 
\end{equation}
Easy algebraic check shows that \eqref{eq:ex1} is equivalent to 
\begin{equation}\label{eq:ex1.1}
\begin{cases}
g(x_1)=\lambda,\\
g(x_1)-g(x_2)=\ln\left[ (1-\Lambda) e^{w(x_1)-w(x_2)}+\Lambda\right].
\end{cases} 
\end{equation}
From \eqref{eq:ex1.1} we see that if $g\in C(E)$ is such that $g(x_1)-g(x_2)<\ln \Lambda$  then no solution to MPE exists due to monotonicity of the logarithm function. In other words, the solution to MPE exists if and only if $g(x_1)>g(x_2)$ or 
\begin{equation}\label{eq:d.pre}
\|g\|_{\textrm{sp}}<-\ln \sqrt{\Lambda}.
\end{equation}
To illustrate how this impacts the values of $\|T^n(0)\|_{\textrm{sp}}$ let us set $\Lambda=\tfrac{1}{2}$, $g(x_1)=0$ and $g(x_2)=\ln k$, for some $k>0$. Then, we get
\[
\|T^{n}(0)\|_{\textrm{sp}}  =\frac{|T^n(0)(x_1)-T^n(0)(x_2)|}{2}=\left|0+\frac{1}{2}\ln\left[\frac{1}{2}+\frac{1}{2}\sum_{i=1}^{n}\left(\frac{k}{2}\right)^i\right]\right|.
\]
Clearly, if $k\geq 2$ then $\|T^{n}(0)\|_{\textrm{sp}}\to\infty$, as $n\to\infty$. On the other hand, if $k<2$, then the we get $\sup_{n\in\bN}\|T^{n}(0)\|_{\textrm{sp}}<\infty$ and the solution exists. For completeness, let us also show how this interacts with the local contraction property. For any $f_1,f_2\in C(E)$, such that $\|f_1-f_2\|_{\textrm{sp}}\neq 0$, we get
\begin{align*}
\|Tf_1-Tf_2\|_{\textrm{sp}} &=
\frac{|(f_1(x_1)-f_2(x_1)) - \left(\ln\left[ \tfrac{1}{2}e^{f_1(x_1)}+\tfrac{1}{2}e^{f_1(x_2)}\right]-\ln\left[ \tfrac{1}{2}e^{f_2(x_1)}+\tfrac{1}{2}e^{f_2(x_2)}\right]\right)|}{2}\\
& =
\frac{\left|(f_1-f_2)(x_1)-(f_1-f_2)(x_2)+\ln\left[\frac{ 1+e^{f_2(x_1)-f_2(x_2)}}{1+e^{f_1(x_1)-f_1(x_2)}}\right]\right|}{2}\\
&=\left|1+\frac{\ln\left(\frac{1+e^{M_2}}{1+e^{M_1}}\right)}{M_1-M_2}\right| \cdot \|f_1-f_2\|_{\textrm{sp}},
\end{align*}
where $M_1:=f_1(x_1)-f_1(x_2)$, $M_2:=f_2(x_1)-f_2(x_2)$, and $\|f_1-f_2\|_{\textrm{sp}}=\tfrac{1}{2}|M_1-M_2|$. Consequently, it is easy to show that for $f_1,f_2\in C(E)$ satisfying $\|f_1\|_{\textrm{sp}}\leq M$ and $\|f_2\|_{\textrm{sp}}\leq M$, we get the local contraction property e.g. for (local) shrinkage constant
\[
\alpha(M)=1/(1+e^{-M}).
\]
Indeed, without loss of generality let us assume that $M_1-M_2>0$. Then, it is sufficient to show that
\begin{equation}\label{eq:ex1:add}
-(\alpha(M)+1)(M_1-M_2)<\ln\left(\tfrac{1+e^{M_2}}{1+e^{M_1}}\right)<(\alpha(M)-1)(M_1-M_2).
\end{equation}
First, let $h(x):=\ln(1+e^x)-(\alpha(M)+1)\cdot x$. Then, left inequality in \eqref{eq:ex1:add} could be rewritten as $h(M_1)<h(M_2)$. Now, since for $x\leq M$ we have
\[
h'(x)=\frac{e^x}{1+e^x} -(\alpha(M)+1) \leq \frac{e^{M}}{1+e^{M}} -(\alpha(M)+1)< -\alpha(M)<0,
\]
we get left inequality in \eqref{eq:ex1:add}. Second, set $k(x):=\ln(1+e^x)+(\alpha(M)-1)\cdot x$. Then, the right inequality in \eqref{eq:ex1:add} could be rewritten as $k(M_1)>k(M_2)$. Noting that for $x\geq -M$ we have
\[
k'(x)=\frac{e^x}{1+e^x} +\alpha(M)-1 \geq \frac{e^{-M}}{1+e^{-M}} +\alpha(M)-1=0,
\]
we conclude the proof of \eqref{eq:ex1:add}.
\end{proof}

Following the proof of Example~\ref{ex:3:ergodic.not} we see a fundamental difference between risk-sensitive and risk-neutral control, where global minorization is typically sufficient to guarantee the existence of the associated Additive Poisson Equation (APE), see Remark~\ref{rem:fund}.

\begin{remark}[Fundamental difference between risk neutral and risk sensitive framework]\label{rem:fund}
Under \eqref{A.1}, the additive Poisson operator $\tilde Tf(x):=g(x)+\bE_x(f)$, $f\in C(E)$, is contractive in the span norm so that there is a unique (up to an additive constant) solution to APE; see Section 5.2.2 in~\cite{HerLas1996} for details. This points out to a fundamental difference between risk neutral and risk sensitive frameworks. To illustrate this,  let us assume the dynamics introduced in Example~\ref{ex:3:ergodic.not} and apply it to risk-neutral framework. Similarly as in \eqref{eq:ex1}, the APE could be stated as
\[
\begin{cases}
w(x_1)+\lambda=g(x_1)+w(x_1),\\
w(x_2)+\lambda=g(x_2)+\left[ (1-d)w(x_1)+d w(x_2)\right].
\end{cases} 
\]
It is easy to check that this equation has a solution for any $g\in C(E)$ and no additional restriction is required here. Indeed, the exemplary solution is given by $\lambda:=g(x_1)$, $w(x_1):=(1-d)^{-1}[g(x_1)-g(x_2)]$, and $w(x_2):=0$.
\end{remark}

\begin{remark}[Necessary and sufficient conditions for MPE existence on finite state space]
Assuming that the state-space is finite, one could derive necessary and sufficient conditions for MPE equation solution existence for any $g\in C(E)$ in both risk-neutral and risk-sensitive setting. In the finite state space risk-sensitive case, MPE solution exists if and only if {\it Unichain condition} and {\it Strong Doeblin condition} is satisfied. Again, this is essentially different from risk-neutral case, where APE exists if and only if {\it Unichain condition} is satisfied. We refer \cite{CavHer2009} for more details.
\end{remark}
\begin{remark}[Split probability condition]
In \cite{DiMSte2007}, the split probability space technique is used to get explicit formula for the solution of MPE. One can show that sufficient conditions for this formula existence are also necessary in the case of Example \ref{ex:3:ergodic.not}. We refer to \cite{DiMSte2007} for more details.
\end{remark}

\section{Existence of solution to MPE for predetermined reward function}\label{S:predReward}

In this section we answer the question when \eqref{A.1} is sufficient to guarantee the existence of a solution to MPE for any $\bP\in\bC(E)$, where the function $g\in C(E)$ is predetermined. From Example~\ref{ex:3:ergodic.not}, we infer that the size of the span-norm of $g$ plays a key role. Next result shows that this is indeed the case and one can find a sharp bound imposed on $\|g\|_{\textrm{sp}}$. For brevity, we introduce function
\[
k(x):=-\tfrac{1}{2}\ln x,\quad x>0.
\]
and show the necessary and sufficient conditions for MPE existence under \eqref{A.1} using some arguments of \cite{DiMSte2000}.
\begin{theorem}\label{th:sharp1}
Let us fix $g\in C(E)$ and let $\Lambda\in (0,1)$. Then, the solution to MPE exists for any $\bP\in\bC(E)$ satisfying \eqref{A.1} with $\Lambda$ if and only if  $\|g\|_{\textrm{sp}} < k(\Lambda)$.
\end{theorem}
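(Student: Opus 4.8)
Let me think about how to prove this sharp bound result. We need both directions: sufficiency ($\|g\|_{\textrm{sp}} < k(\Lambda)$ implies MPE exists for every $\bP$ satisfying (A.1) with $\Lambda$), and necessity (if $\|g\|_{\textrm{sp}} \geq k(\Lambda)$, there is some $\bP$ satisfying (A.1) for which no bounded MPE solution exists).

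For sufficiency: The natural approach is to control the growth of $\|T^n 0\|_{\textrm{sp}}$. From Theorem \ref{loccontr}, if the iterates stay bounded in span norm by some $M$, then local contraction with rate $\alpha(M)<1$ kicks in and Banach fixed point gives the solution. So I want to find a ball that $T$ maps into itself. The key estimate: for $f$ with $\|f\|_{\textrm{sp}} \le M$, one has $\mu_{(x,f)}(B) \ge e^{-2\|f\|_{\textrm{sp}}}\bP(x,B) \ge e^{-2M}\bP(x,B)$ (or a one-sided version $e^{-\|f\|_{\textrm{sp}}}\bP(x,B)$ as used in the proof of Theorem \ref{loccontr}). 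Combined with (A.1), this should give a bound of the form $\sup_{x,x'}\|\mu_{(x,f)} - \mu_{(x',f)}\|_{\textrm{var}} \le 1 - e^{-2M}(1-\Lambda)$ or similar, hence $\|Tf_1 - Tf_2\|_{\textrm{sp}} \le (1 - e^{-2M}(1-\Lambda))\|f_1-f_2\|_{\textrm{sp}}$. Then $\|T^{n+1}0 - T^n 0\|_{\textrm{sp}}$ is geometrically summable once we are in the ball, but we also pick up $\|g\|_{\textrm{sp}}$ each step from $\|T0\|_{\textrm{sp}} = \|g\|_{\textrm{sp}}$. Summing the geometric-type series, the fixed-point radius works out to roughly $\|g\|_{\textrm{sp}} / (e^{-2M}(1-\Lambda))$; requiring this to be consistent (i.e. solvable for $M$) gives precisely $\|g\|_{\textrm{sp}} < k(\Lambda) = -\tfrac12 \ln \Lambda$. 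I'd solve the self-consistency inequality $\|g\|_{\textrm{sp}} \le M \cdot e^{-2M}(1-\Lambda)$ (or the sharper version) and check the optimal $M$ yields the stated threshold; the algebra around the $e^{-2M}$ term and the factor $(1-\Lambda)$ should reproduce $-\tfrac12\ln\Lambda$ after optimization.

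For necessity: I would exhibit an explicit kernel generalizing Example \ref{ex:3:ergodic.not}. Given $g$ with $\|g\|_{\textrm{sp}} \ge k(\Lambda)$, pick two points $x_1, x_2$ (or approximate sup/inf of $g$) with $g(x_1)$ near $\sup g$ and $g(x_2)$ near $\inf g$, so $g(x_1) - g(x_2)$ is close to $2\|g\|_{\textrm{sp}} \ge 2k(\Lambda) = -\ln\Lambda$, i.e. $g(x_2) - g(x_1) \le \ln\Lambda$ (up to $\epsilon$). Then build a kernel that sends $x_1$ to a point where $g$ is near its inf and $x_2$ likewise, creating the obstruction that in Example \ref{ex:3:ergodic.not} forced $g(x_1) - g(x_2) \ge \ln\Lambda$ strictly. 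More carefully, the two-point matrix $P = \begin{bmatrix} 1 & 0 \\ 1-\Lambda & \Lambda \end{bmatrix}$ already shows via \eqref{eq:ex1.1} that the MPE requires $g(x_1) - g(x_2) > \ln\Lambda$, failing exactly at the boundary; so for the "if and only if" one arranges $g$ so that its worst two-value gap realizes this, and confirms (A.1) holds with constant $\Lambda$ for this kernel. The boundary case $\|g\|_{\textrm{sp}} = k(\Lambda)$ must be checked to give non-existence (strict inequality needed), matching the strict "$<$" in the theorem.

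The main obstacle I anticipate is getting the sufficiency constant exactly sharp rather than merely $O(1)$: the naive self-mapping argument tends to lose constants (e.g. using $e^{-2M}$ versus a one-sided $e^{-M}$, or being sloppy about whether the relevant quantity is $\|g\|_{\textrm{sp}}$ or $2\|g\|_{\textrm{sp}}$), and one needs to be careful that the optimization over the radius $M$ lands precisely on $-\tfrac12\ln\Lambda$ and not, say, $-\ln\Lambda$ or $-\tfrac14\ln\Lambda$. Tracking the Esscher-transform distortion bound and the span-vs-sup factor of $2$ consistently throughout — and possibly using a smarter a priori bound on $\|T^n 0\|_{\textrm{sp}}$ directly (a recursion $a_{n+1} \le \ln(\Lambda e^{2a_n} + (1-\Lambda)) + \text{stuff}$, analyzed for a fixed point) rather than the contraction-radius route — is where the delicate work lies. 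I would likely follow the direct recursion approach, showing $\limsup_n \|T^n 0\|_{\textrm{sp}} < \infty$ iff the scalar fixed-point equation $a = \tfrac12\ln(\Lambda e^{2a} + (1-\Lambda)) + \|g\|_{\textrm{sp}}$ has a finite solution, which happens exactly when $\|g\|_{\textrm{sp}} < -\tfrac12\ln\Lambda$.
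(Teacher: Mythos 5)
Your sufficiency argument, in the form you finally settle on (the direct recursion $a_{n+1}\le \|g\|_{\textrm{sp}}+\tfrac12\ln\bigl(\Lambda e^{2a_n}+1-\Lambda\bigr)$ for $a_n=\|T^n0\|_{\textrm{sp}}$, whose scalar fixed point is finite precisely when $\Lambda e^{2\|g\|_{\textrm{sp}}}<1$, i.e. $\|g\|_{\textrm{sp}}<k(\Lambda)$, followed by Theorem~\ref{loccontr} and Banach's fixed point theorem) is correct and is essentially the paper's proof, which iterates the almost identical bound $\|Tf\|_{\textrm{sp}}\le\|g\|_{\textrm{sp}}+\tfrac12\ln\bigl(1+\Lambda e^{2\|f\|_{\textrm{sp}}}\bigr)$ and sums a geometric series. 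You were right to distrust and abandon the ball/self-consistency route $\|g\|_{\textrm{sp}}\le M e^{-2M}(1-\Lambda)$: optimizing it gives a constant of order $(1-\Lambda)/(2e)$, not $-\tfrac12\ln\Lambda$. (One small caveat: only the ``if'' half of your scalar equivalence is justified, since the recursion is merely an upper bound; but that is all you use.)

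The genuine gap is in the necessity direction at the boundary $\|g\|_{\textrm{sp}}=k(\Lambda)$ when the extremes of $g$ are not attained. Remember that $g$ is fixed and you may only choose $\bP$. Your plan is to take two points approximating $\sup g$ and $\inf g$ and reuse the two-state kernel of Example~\ref{ex:3:ergodic.not}; but for any two actual points the gap is then \emph{strictly} smaller than $2k(\Lambda)=-\ln\Lambda$, and by the criterion \eqref{eq:ex1.1} that very kernel \emph{does} admit a bounded MPE solution, so no contradiction results --- working ``up to $\epsilon$'' destroys exactly the strictness you need, and merely flagging that the boundary case ``must be checked'' does not supply a mechanism. (Also note the orientation: the obstruction requires the absorbing state to sit where $g$ is small and the state that lingers with probability $\Lambda$ to sit where $g$ is large; your description of which point is sent where is muddled, though your citation of the condition $g(x_1)-g(x_2)>\ln\Lambda$ is correct.) The paper closes this case with a different construction: it takes $x_1$ at the infimum and a sequence $(x_i)$ with $g(x_i)\uparrow\sup g$, sets $\bP(x_1,x_1)=1$, $\bP(x_i,x_1)=1-\Lambda$, $\bP(x_i,x_i)=\Lambda$, and shows that any bounded solution would force $w(x_i)\to\infty$ because $g(x_1)-g(x_i)\to\ln\Lambda$; the boundary value is realized only in the limit inside a single countably-supported kernel. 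Without this (or an equivalent device), your argument proves the ``only if'' direction only when the extremes of $g$ are attained.
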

\begin{proof}
($\Leftarrow$) Let $g\in C(E)$ be such that  $\|g\|_{\textrm{sp}} < k(\Lambda)$ and let $\bP\in\bC(E)$ satisfy \eqref{A.1} for $\Lambda$. First, Let us show that the iterated sequence of MPE operators is bounded in the span norm, i.e. there exists $M\in\bR$ such that $\| T^n0\|_{\textrm{sp}}<M$ for $n\in\bN$. From \eqref{A.1},  for  any non-negative function $f\in C(E)$ and $x,x'\in E$ we  get
\begin{align}
&\mathbb{P}f(x)-\mathbb{P}f(x')\leq \int_E f(y)(\mathbb{P}(x,dy)-\mathbb{P}(x',dy))\leq \nonumber \\
&\int_H f(y)(\mathbb{P}(x,dy)-\mathbb{P}(x',dy))\leq \sup_{y\in E}f(y) \Lambda 
\end{align}
where $H$ comes for Hahn decomposition of $\mathbb{P}(x,dy)-\mathbb{P}(x',dy)$. Therefore, we have 
\begin{align}
Tf(x)-Tf(x') & \leq g(x)-g(x')+ \ln \frac{\int_E e^{f(y)}\mathbb{P}(x,dy)}{\int_E e^{f(y)}\mathbb{P}(x',dy)}\nonumber \\
&\leq  2\|g\|_{\textrm{sp}} + \ln \frac{\int_E e^{f(y)}\mathbb{P}(x',dy)+\Lambda e^{\sup_{y\in E} f(y)}}{\int_E e^{f(y)}\mathbb{P}(x',dy)}\nonumber \\
& \leq  2\|g\|_{\textrm{sp}} + \ln \left[1+\Lambda e^{2\|f\|_{\textrm{sp}}} \right]
\end{align}
which implies $\|Tf\|_{\textrm{sp}}\leq \|g\|_{\textrm{sp}} + \tfrac{1}{2}\ln\left[1+\Lambda e^{2\|f\|_{\textrm{sp}}}\right]$. Thus, by iteration, for $n\in\bN$, we get
%
%
\begin{equation}
\|T^n f\|_{\textrm{sp}}\leq \|g\|_{\textrm{sp}} + \frac{1}{2}\ln\left[1+ \sum^{n-1}_{i=1}\left(\Lambda e^{2\|g|\|_{\textrm{sp}}}\right)^i + \Lambda^n e^{(n-1)2\|g\|_{\textrm{sp}}+2\|f\|_{\textrm{sp}}}\right].
\end{equation}
Since $\|g\|_{\textrm{sp}}<k(\Lambda)$, we know that $\Lambda e^{2\|g\|_{\textrm{sp}}}<1$. Therefore, using geometric convergence arguments and setting $\epsilon_n:=\Lambda^n e^{(n-1)2\|g\|_{\textrm{sp}}+2\|f\|_{\textrm{sp}}}$, we get
\begin{equation}
\|T^nf\|_{\textrm{sp}}\leq \|g\|_{\textrm{sp}} + \frac{1}{2}\ln\left[1+ \frac{\Lambda e^{2\|g\|_{sp}} }{ 1-\Lambda e^{2\|g\|_{\textrm{sp}}}}+\epsilon_n\right]=\|g\|_{\textrm{sp}}+\frac{1}{2}\ln\left(\frac{1}{1-\Lambda e^{2\|g\|_{\textrm{sp}}}}+\epsilon_n\right).
\end{equation}
Now, since $\epsilon_n\searrow 0$, $n\to\infty$, and $1-\Lambda e^{2\|g\|_{\textrm{sp}}}>0$ we know that for sufficiently big $n\in\bN$ we get finite upper bound for $\|T^nf\|_{\textrm{sp}}$ that does not depend on $f$. In particular, this holds for $f\equiv 0$, which concludes the proof of the fact that the sequence $\|T^n 0\|_{\textrm{sp}}$, $n\in\bN$, is bounded. Now, combining this result with Theorem~\ref{loccontr} we can apply standard Banach's fixed point arguments to get solution to MPE equation \eqref{eq:3:bellman.av}; see e.g. \cite{PitSte2016}. This concludes this part of the proof.

\medskip

\noindent ($\Leftarrow$)
We want to show that for any $g\in C(E)$  such that $\|g\|_{\textrm{sp}} \geq k(\Lambda)$ one can find kernel $\bP\in\bC(E)$ that satisfies \eqref{A.1} with $\Lambda$ but for which a solution to MPE equation \eqref{eq:3:bellman.av} does not exists. 

First, let us assume that $g\in C(E)$ is such that $\|g\|_{\textrm{sp}} > k(\Lambda)$ or $\|g\|_{\textrm{sp}} = k(\Lambda)$ and the extremes are attained by $g$. For brevity, we only sketch the proof based on the analysis performed in Example~\ref{ex:3:ergodic.not}. In this case, one can find $x,x'\in E$, such that $g(x)-g(x')\geq k(\Lambda)$. Now, we can find a kernel $\bP\in \bC(E)$ with support on $\{x,x'\}$ such that it satisfies \eqref{A.1} for $\Lambda$, and for which
\[
\bP(x,x)=1,\quad \bP(x,x')=0,\quad \bP(x',x)=1-\Lambda,\quad \bP(x',x')=\Lambda.
\]
In consequence, after one iteration of operator $\bP$ we effectively end up with a situation similar to this introduced in Example~\ref{ex:3:ergodic.not} in which it has been shown that the solution to MPE exists if and only if $\|g\|_{\textrm{sp}} < k(\Lambda)$. This leads to contradiction with the fact that $g(x)-g(x')\geq k(\Lambda)$.

Second let us assume that $\|g\|_{\textrm{sp}} = k(\Lambda)$ and the upper extreme is not attained. Let $x_1\in E$ be such that $x_1=\inf_{x\in E}g(x)$ and let $(x_i)_{i=2}^{\infty}$ denote a sequence of points such that $(g(x_i))_{i=2}^{\infty}$ is increasing and $g(x_i)\to \sup_{x\in E}g(x)$ as $i\to\infty$.  Let $\bP\in \bC(E)$ denote the kernel with support on $(x_i)_{i=1}^{\infty}$ such that for $i=2,3,\ldots$, we have
\[
\bP(x_1,x_1)=1,\quad \bP(x_1,x_i)=0,\quad \bP(x_i,x_1)=1-\Lambda,\quad \bP(x_i,x_{i})=\Lambda.
\]
Assuming the existence of bounded MPE, and using similar logic as in Example~\ref{ex:3:ergodic.not}, we get
\[
\begin{cases}
g(x_1)=\lambda,\\
g(x_1)-g(x_i)=\ln\left[ (1-\Lambda) e^{w(x_1)-w(x_i)}+\Lambda\right].
\end{cases} 
\]
Now, noting that $g(x_1)-g(x_i)\to -2\|g\|_{\textrm{sp}}$, as $i\to\infty$, and $-2\|g\|_{\textrm{sp}}=-2k(\Lambda)=\ln\Lambda$, we get that $w(x_i)\to \infty$ as $i\to\infty$, which contradicts the fact that $w\in C(E)$.

The remaining cases when both extremes are not attained, or only the lower extreme is attained could be treated with similar logic; we omit the proof for brevity.
\end{proof}

For completeness, we state similar result
under \eqref{A.2} assumption and provide alternative proof of the underlying operator boundedness, based directly on span-norm size analysis.

\begin{theorem}\label{pr:sharp2}
Let us fix $g\in C(E)$ and let $d\in (0,1)$. Then, the solution to MPE exists for any $\bP\in\bC(E)$ satisfying \eqref{A.2} with $d$ if and only if $\|g\|_{\textrm{sp}} < k(1-d)$.
\end{theorem}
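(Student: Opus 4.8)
The plan is to run the argument of Theorem~\ref{th:sharp1} almost verbatim, but to replace the Hahn--Jordan bound by the minorization decomposition of the kernel, which yields the cleaner operator estimate announced before the statement. For the sufficiency direction, assume $\|g\|_{\textrm{sp}}<k(1-d)$ and let $\bP\in\bC(E)$ satisfy \eqref{A.2} with $d$. Write $\mathbb{P}(x,\cdot)=d\,\eta(\cdot)+(1-d)Q(x,\cdot)$, where $Q(x,\cdot):=(1-d)^{-1}\bigl(\mathbb{P}(x,\cdot)-d\,\eta(\cdot)\bigr)\in\cP(E)$ is a genuine transition kernel by \eqref{A.2}. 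Fix $f\in C(E)$ and, using translation invariance of $\|\cdot\|_{\textrm{sp}}$, normalise $f$ so that $\sup_{E}f=-\inf_{E}f=\|f\|_{\textrm{sp}}=:s$; then $\eta(e^{f})$, $Q(x,e^{f})$ and $Q(x',e^{f})$ all lie in $[e^{-s},e^{s}]$. Writing $\mathbb{P}e^{f}(x)/\mathbb{P}e^{f}(x')=\bigl(d\,a+(1-d)b\bigr)/\bigl(d\,a+(1-d)b'\bigr)$ with $a=\eta(e^{f})$, a one-line monotonicity check (the ratio decreases in $a$ and is maximised at $a=e^{-s}$, $b=e^{s}$, $b'=e^{-s}$) gives
\[
\frac{\mathbb{P}e^{f}(x)}{\mathbb{P}e^{f}(x')}\le d+(1-d)e^{2\|f\|_{\textrm{sp}}},
\]
and combining with $g(x)-g(x')\le 2\|g\|_{\textrm{sp}}$ yields $\|Tf\|_{\textrm{sp}}\le\|g\|_{\textrm{sp}}+\tfrac12\ln\bigl[d+(1-d)e^{2\|f\|_{\textrm{sp}}}\bigr]$.

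I would then iterate this estimate. Putting $\phi(s):=\|g\|_{\textrm{sp}}+\tfrac12\ln[d+(1-d)e^{2s}]$, the quantities $s_{n}:=\|T^{n}0\|_{\textrm{sp}}$ satisfy $s_{0}=0$ and $s_{n+1}\le\phi(s_{n})$. The map $\phi$ is continuous and increasing, and solving $\phi(s)=s$ via $u=e^{2s}$ gives $u=d/\bigl(e^{-2\|g\|_{\textrm{sp}}}-(1-d)\bigr)$, which has a positive solution $s^{*}$ precisely when $(1-d)e^{2\|g\|_{\textrm{sp}}}<1$, i.e. exactly when $\|g\|_{\textrm{sp}}<k(1-d)$. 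Since $s_{0}=0\le s^{*}$ (because $\|g\|_{\textrm{sp}}\ge 0$) and $\phi$ is increasing with $\phi(s^{*})=s^{*}$, a routine induction gives $s_{n}\le s^{*}$ for all $n$, hence $\sup_{n}\|T^{n}0\|_{\textrm{sp}}<\infty$. Because \eqref{A.2} with $d$ implies \eqref{A.1} with $\Lambda=1-d$ (Proposition~\ref{pr:assumption.relation}), Theorem~\ref{loccontr} supplies the local span-contraction, and a standard Banach fixed point argument on the span-ball of radius $s^{*}$ then produces a bounded MPE solution.

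For the necessity direction I would reuse the construction from the second half of the proof of Theorem~\ref{th:sharp1}, with $\Lambda$ replaced by $1-d$: if $\|g\|_{\textrm{sp}}\ge k(1-d)$, pick $x$ with $g(x)$ at (or approaching) $\inf_{E}g$ and $x'$ with $g(x')$ at (or approaching) $\sup_{E}g$, so that $g(x')-g(x)=2\|g\|_{\textrm{sp}}\ge 2k(1-d)=-\ln(1-d)$, and take the kernel supported on those points with $\mathbb{P}(x,x)=1$, $\mathbb{P}(x',x)=d$, $\mathbb{P}(x',x')=1-d$ (extended to a Feller kernel on $E$ as in Theorem~\ref{th:sharp1}). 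This kernel satisfies \eqref{A.2} with $d$ and $\eta=\delta_{x}$, while the explicit computation of Example~\ref{ex:3:ergodic.not} (with $\Lambda=1-d$) shows that no bounded MPE solution exists --- directly when the extremes are attained, and via a blow-up of $w$ along the approximating sequence otherwise.

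The only genuinely new work sits in the sufficiency direction, and its crux is the ratio estimate $\mathbb{P}e^{f}(x)/\mathbb{P}e^{f}(x')\le d+(1-d)e^{2\|f\|_{\textrm{sp}}}$ together with the observation that the induced scalar recursion $\phi$ remains bounded on exactly the set $\{\|g\|_{\textrm{sp}}<k(1-d)\}$; everything else is either a transcription of the proof of Theorem~\ref{th:sharp1} or an elementary monotone fixed point argument. One could also shortcut the sufficiency direction by invoking \eqref{A.2}$\Rightarrow$\eqref{A.1} with $\Lambda=1-d$ and quoting Theorem~\ref{th:sharp1}, but the estimate above is worth recording since it is sharper at the level of the operator.
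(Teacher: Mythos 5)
Your proof is correct, and its skeleton matches the paper's: the shortcut you mention at the end is in fact the paper's official proof of this theorem --- invoke Proposition~\ref{pr:assumption.relation} to pass from \eqref{A.2} with $d$ to \eqref{A.1} with $\Lambda=1-d$ and quote Theorem~\ref{th:sharp1}, the point for necessity being exactly the one you check, namely that the two-state (and countable-state) counterexample kernels of Theorem~\ref{th:sharp1} satisfy \eqref{A.2} with $\eta$ the point mass at the absorbing state. Where you genuinely diverge is in the direct proof of boundedness of the iterates under \eqref{A.2}: you split the kernel as $\bP(x,\cdot)=d\,\eta(\cdot)+(1-d)Q(x,\cdot)$ and derive the one-step estimate $\|Tf\|_{\textrm{sp}}\le\|g\|_{\textrm{sp}}+\tfrac12\ln\bigl[d+(1-d)e^{2\|f\|_{\textrm{sp}}}\bigr]$, then close the argument by a monotone fixed point of the scalar map $\phi$, whereas the paper's alternative argument centers $g_n:=T^n0$ in span norm and runs a three-case analysis on the sets $A_n=\{g_n+c_n\le 0\}$ with the threshold $K=\ln(d-\epsilon)-\ln\epsilon$, closing with geometric-series bounds. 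Your route buys a cleaner and slightly sharper operator inequality (it improves the factor $1+\Lambda e^{2\|f\|_{\textrm{sp}}}$ appearing in the proof of Theorem~\ref{th:sharp1} to $d+(1-d)e^{2\|f\|_{\textrm{sp}}}$, with the same threshold $k(1-d)$), and the explicit fixed point $s^{*}$ even gives an a priori bound on the span norm of the solution; the paper's case analysis avoids the explicit decomposition into $\eta$ and $Q$ and stays closer to arguments that use only indicator-type bounds on $\bP(\cdot,A_n)$. Both variants finish identically via Theorem~\ref{loccontr} and Banach's fixed point argument (applied, as in the paper, modulo constants since $\|\cdot\|_{\textrm{sp}}$ is only a semi-norm), and your necessity sketch is at the same level of detail as the paper's, including the approximating-sequence blow-up in the boundary case $\|g\|_{\textrm{sp}}=k(1-d)$ with unattained supremum.
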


\begin{proof} 
The proof of this fact follows directly from Theorem \ref{th:sharp1} combined with Proposition \ref{pr:assumption.relation} since \eqref{A.2} implies \eqref{A.1}. Nevertheless, we decided to present an alternative proof of the fact that under \eqref{A.2} with $d$, the iterated operator $T$ is bounded for any $g\in C(E)$ such that $\|g\|_{\textrm{sp}}<k(1-d)$. The proof is based on a span norm centering and distance analysis that is aligned with the method introduced in the proof of Theorem~\ref{loccontr}. Let $g\in C(E)$ be such that  $\|g\|_{\textrm{sp}} < k(1-d)$ and let $\bP\in\bC(E)$ satisfy \eqref{A.2} for $d$. For $n\in\bN$, let $g_n:=T^n0$ and $c_n:=\inf_{c\in\bR}\|g_n+c\|$. Then, we get
\begin{align}
\| g_{n+1}\|_{\textrm{sp}} & = \sup_{x,y\in E}\frac{g(x)+\mu_x(g_n)-g(y)-\mu_y(g_n)}{2}\nonumber\\
&\leq \|g\|_{\textrm{sp}}+\sup_{x,y\in E}\frac{\mu_x(g_n+c_n)-\mu_y(g_n+c_n)}{2}.\label{eq:span1}
\end{align}
From $\|g\|_{\textrm{sp}} < k(d)$ we know that there exists $\epsilon\in\bR$ such that $d>\frac{\epsilon}{ 2}>0$ and
\begin{equation}\label{eq:eps.fun}
\|g\|_{\textrm{sp}} <-\tfrac{1}{2}\ln(1-d+2\epsilon).
\end{equation}
Let $K:=\ln (d-\epsilon)-\ln \epsilon$ and $A_n:=\{x\in E\colon g_n(x)+c_n\leq 0\}$. For any fixed $n\in\bN$ we consider three disjoint cases: (1) $\|g_n\|_{\textrm{sp}}\leq K$; (2) $\|g_n\|_{\textrm{sp}}> K$ and $\inf_{x\in E}\bP(x,A^c_n)\geq \epsilon$; (3) $\|g_n\|_{\textrm{sp}}>K$ and $\inf_{x\in E}\bP(x,A^c_n)< \epsilon$. 

\noindent In the first case, noting that $\|g_n+c_n\|=\|g_n\|_{\textrm{sp}}$, and using \eqref{eq:span1}, we get 
\begin{equation}\label{eq:span.2a}
\|g_{n+1}\|_{\textrm{sp}}\leq K+\|g\|_{\textrm{sp}}.
\end{equation}
In the second case, using \eqref{eq:span1}
and the fact that $1_{{A_n}}(g_n(x)+c_n)\geq - 1_{{A_n}}\|g_n\|_{\textrm{sp}}$ and noting that for any $x,y\in E$ we have
\begin{align*}
\mu_y(g_n+c_n)& \geq \mu_y(-1_{A_n} \|g_n\|_{\textrm{sp}})\\
&= \ln\left[e^{-\|g_n\|_{\textrm{sp}}}\bP(y,A_n)+e^{0}\bP(y,A^c_n)\right]\\
&\geq \inf_{y\in E}\ln\bP(y,A^c_n)\\
&\geq \ln \epsilon
\end{align*}
and $\mu_x(g_n+c_n)\leq \|g_n\|_{\textrm{sp}}$, we get
\begin{equation}\label{eq:span.2b}
\| g_{n+1}\|_{\textrm{sp}}  \leq \tfrac{1}{2}\| g_{n}\|_{\textrm{sp}}+ \|g\|_{\textrm{sp}}-\tfrac{1}{2}\ln \epsilon.
\end{equation}
In the third case, we get $\sup_{x\in E}\bP(x,A_n)> 1-\epsilon$. Thus, recalling assumption \eqref{A.1}, we get $\epsilon\geq \inf_{x\in E}\bP(x,A_n^c)\geq d\nu(A_n^c)\geq d-\bP(x,A_n)$ so that 
$\inf_{x\in E}\bP(x, A_n)\geq d-\epsilon$. Consequently, for any $x,y\in E$ we have
\begin{align*}
\mu_x(g_n+c_n )&\leq \mu_x( 1_{A_n^c}\|g_n\|_{\textrm{sp}})\\
&\leq \|g_n\|_{\textrm{sp}}+\mu_x( -1_{A_n}K)\\
&= \|g_n\|_{\textrm{sp}}+\ln\left[e^{-K}\bP(x,A_n)+e^{0}\bP(x,A^c_n)\right]\\
&\leq \|g_n\|_{\textrm{sp}}+\ln\left[1-\left(1-\frac{\epsilon}{d-\epsilon}\right)\inf_{x\in E} \bP(x,A_n)\right]\\
&\leq \|g_n\|_{\textrm{sp}}+\ln\left[1-\left(\frac{d-2\epsilon}{d-\epsilon}\right)(d-\epsilon)\right]\\
&=\|g_n\|_{\textrm{sp}}+\ln\left[1-d+2\epsilon\right]
\end{align*}
and $\mu_y(g_n+c_n)\geq -\|g_n\|_{\textrm{sp}}$. Thus, recalling \eqref{eq:span1} and \eqref{eq:eps.fun} we get
\begin{equation}\label{eq:span.2c}
\| g_{n+1}\|_{\textrm{sp}} \leq \| g_{n}\|_{\textrm{sp}} +\|g\|_{\textrm{sp}} +\tfrac{1}{2}\ln\left[1-d+2\epsilon\right]\leq \| g_{n}\|_{\textrm{sp}}.
\end{equation}
Combining all three cases, i.e. \eqref{eq:span.2a}, \eqref{eq:span.2b}, and \eqref{eq:span.2c}, for any $n\in\bN$ we get
\[
\| g_{n+1}\|_{\textrm{sp}} \leq \max\left\{K+\|g\|_{\textrm{sp}},\,\, \tfrac{1}{2}\| g_{n}\|_{\textrm{sp}}+ \|g\|_{\textrm{sp}}-\tfrac{1}{2}\ln \epsilon,\,\,  \| g_{n}\|_{\textrm{sp}}\right\},
\]
from which the proof follows using standard geometric series arguments.
\end{proof}

Theorem~\ref{th:sharp1} sheds some light on the interaction between the risk-neutral and risk-sensitive framework that are represented by additive and multiplicative Poisson equations, respectively. The APE given by
\[
w_0(x)=g(x)-\lambda_0+\int_E w_0(y)\bP(x,dy),\quad x\in E,
\]
could be seen as a limit of risk-sensitive MPEs with risk-aversion $\gamma\neq 0$ given by
\[
w_{\gamma}(x)=\gamma g(x)-\lambda_{\gamma}+\ln\int_E e^{w_{\gamma}(y)}\bP(x,dy),\quad x\in E,
\]
for $\gamma\to 0$.  Note that by introducing risk averse parameter $\gamma$ we are effectively replacing $g$ by $\gamma g$ in the standardised MPE problem. Consequently, whatever the initial choice of $g\in C(E)$, we can find $\gamma$ small enough, i.e. such that the span-norm induced bound presented in 
Theorem~\ref{th:sharp1} (or Theorem~\ref{pr:sharp2}) is satisfied by $\gamma g$. This indicates that solution to APE should always exists under \eqref{A.1}, which is indeed the case, see e.g. \cite{KonMey2003}. Also, note that some results presented in the literature state existence of MPE (or its controlled version) for sufficiently small $\gamma$ which is consistent with the results presented herein, see e.g. Theorem 1 in \cite{Ste1999} or Theorem 5.4 in \cite{BalMey2000}.


\section{Existence of bounded MPE solution for generic reward function}\label{MPE.always}
In this section we investigate what additional assumptions, apart from \eqref{A.1}, could be imposed on a transition kernel $\bP\in \bC(E)$, so that the bounded solution to MPE exists for arbitrary $g\in C(E)$. It should be noted that while assumption \eqref{A.1} is natural for compact spaces, it might be not satisfied for general locally compact spaces. In fact, for non-compact state space, one can show that MPE solution is typically unbounded under very generic conditions linked to so called $C_0$-Feller property.

The typical condition imposed on MPE in the mixing framework relates to so called {\it strong mixing}. We say that the transition kernel satisfies {\it multi-step strong mixing condition} for $n\in\bN$ and $L>0$ if
\begin{equation}\tag{A.3}\label{A.3}
 \sup_{x,x'\in E}  \frac{\bP_n(x,B)}{\bP_n(x',B)}\leq L,\quad B\in\cE,
 \end{equation}
where the convention $\frac{0}{0}=0$ and $\frac{1}{0}=\infty$ is used. Note that under \eqref{A.3} we have that iterated measures $\bP_n(x,\cdot)$, for $x\in E$, are equivalent. Furthermore, \eqref{A.3} implies \eqref{A.1'} for the same $n\in\bN$,  which effectively leads to local contraction property of operator $T^n$; see Theorem~\ref{loccontr}. This condition is satisfied e.g. by regular reflected diffusions in bounded domains, see~\cite[Remark 2.1]{MenRob2018} and references therein. Let us now show that under \eqref{A.3} the iterated operator $T$ is bounded in the span norm, so that we get the existence of solution to MPE.

\begin{proposition}
Let $\bP\in \bC(E)$ satisfy \eqref{A.3}. Then, the solution to MPE equation \eqref{eq:3:bellman.av} exists for any $g\in C(E)$.
\end{proposition}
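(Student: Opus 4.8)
The plan is to mimic the span-norm boundedness argument from the proof of Theorem~\ref{th:sharp1}, but now exploiting the strong mixing constant $L$ from \eqref{A.3} to beat back the span growth regardless of how large $\|g\|_{\textrm{sp}}$ is. The key observation is that \eqref{A.3} forces the $n$-step Esscher transforms to all lie within a fixed total-variation ball, and this contraction rate does not degrade as the iterate gets larger (unlike the situation in Theorem~\ref{th:sharp1}, where the multiplicative factor $\Lambda e^{2\|g\|_{\textrm{sp}}}$ can exceed $1$). Since $T^n$ has the local-contraction property by Theorem~\ref{loccontr} (via \eqref{A.1'}, which \eqref{A.3} implies for the same $n$), and the constant $\alpha(M)$ there is uniform over $g$, it suffices to show that the sequence $\bigl(\|T^{nk}0\|_{\textrm{sp}}\bigr)_{k\in\bN}$ stays bounded; then one applies Banach's fixed point theorem to $T^n$ on a suitable span-ball to get a fixed point of $T^n$, and a standard averaging/uniqueness argument upgrades it to a solution of the original MPE.

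First I would work out the one-step span estimate for the $n$-step operator $T_n f(x) := \sum_{i=0}^{n-1}\mu$-iterate, i.e. the operator whose fixed point (shifted by $n\lambda$) solves the $n$-step MPE; concretely $T^n f(x) = S_n(x) + \ln\int_E e^{f(y)}\bP_n(x,dy)$ for an appropriate bounded function $S_n$ built from $g$ and the intermediate kernels, with $\|S_n\|_{\textrm{sp}} \le n\|g\|_{\textrm{sp}} =: C_n < \infty$. For any $f\in C(E)$ and $x,x'\in E$, using \eqref{A.3} one gets
\[
\ln\frac{\int_E e^{f(y)}\bP_n(x,dy)}{\int_E e^{f(y)}\bP_n(x',dy)} \le \ln\frac{\int_E e^{f(y)} L\,\bP_n(x',dy)}{\int_E e^{f(y)}\bP_n(x',dy)} = \ln L,
\]
so $\|T^n f\|_{\textrm{sp}} \le C_n + \tfrac12\ln L$ for \emph{every} $f\in C(E)$ — a bound with no dependence on $\|f\|_{\textrm{sp}}$ at all. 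In particular $\|T^{nk}0\|_{\textrm{sp}} = \|T^n(T^{n(k-1)}0)\|_{\textrm{sp}} \le C_n + \tfrac12\ln L$ for all $k$, so the subsequence is bounded by $M := C_n + \tfrac12\ln L$.

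Next I would invoke Theorem~\ref{loccontr} applied to the kernel $\bP_n$ (which satisfies \eqref{A.1} by Proposition~\ref{pr:assumption.relation}(1) combined with the remark that \eqref{A.3} gives \eqref{A.1'}, or directly from the stated implication \eqref{A.3}$\Rightarrow$\eqref{A.1'}): the operator $T^n$ is a local contraction in the span norm on the ball $\{\|f\|_{\textrm{sp}}\le M\}$, and by the estimate above this ball is mapped into itself (shrinking $0$'s image lies in it, and more carefully one iterates from $0$ and uses that all iterates stay in the ball). Applying Banach's fixed point theorem on the quotient space $C(E)/\{\text{constants}\}$ — where the span seminorm becomes a genuine norm by \eqref{eq:norm.rel}, exactly as in \cite{PitSte2016} — yields $\bar w\in C(E)$ and $\bar\lambda\in\bR$ with $T^n\bar w = \bar w + n\bar\lambda$. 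Finally, to recover a genuine MPE solution $(w,\lambda)$ for $T$ itself, I would set $w := \tfrac{1}{n}\sum_{j=0}^{n-1}(T^j\bar w - j\bar\lambda)$ and $\lambda := \bar\lambda$ and check directly that $Tw = w+\lambda$, using $T(T^j\bar w) = T^{j+1}\bar w$ and $T^n\bar w = \bar w + n\bar\lambda$ to telescope; alternatively one notes that since $T$ commutes with $T^n$, $T\bar w$ is also a fixed point of $T^n$ modulo constants, and the uniqueness from the contraction forces $T\bar w = \bar w + c$ for a constant $c$, which must equal $\bar\lambda$ by iterating $n$ times.

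The main obstacle I anticipate is bookkeeping around the non-autonomous intermediate term: $T^n f$ is not simply $g + \mu^{(n)}(f)$ but involves nested entropic utilities against the one-step kernels, so verifying that the "drift" $S_n$ is bounded in span norm (not just bounded) and that the $\ln L$ estimate genuinely controls the full span increment requires care — in particular one must confirm that the inequality $\ln\int e^f \bP_n(x,\cdot)/\int e^f\bP_n(x',\cdot)\le \ln L$ survives being embedded inside the outer logarithms of the nested structure. This is handled by peeling off one $\ln$ at a time and using monotonicity and translation-invariance of $\mu_x(\cdot)$ at each layer, together with the fact that \eqref{A.3} for $\bP_n$ is all that the innermost layer sees; but it is the step where an over-hasty argument could go wrong, so it deserves to be written out rather than asserted.
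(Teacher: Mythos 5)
Your proposal follows essentially the same route as the paper's proof: the identical key estimate $\|T^n f\|_{\textrm{sp}} \le n\|g\|_{\textrm{sp}} + \tfrac{1}{2}\ln L$ (uniform in $f$) derived from \eqref{A.3}, hence boundedness of the iterate sequence, combined with the local contraction of Theorem~\ref{loccontr} (via \eqref{A.1'}) and Banach's fixed point argument; your explicit upgrade from a fixed point of $T^n$ to a solution of the one-step MPE is a detail the paper leaves implicit. One small correction: $T^n f$ is \emph{not} of the form $S_n + \ln\int_E e^{f(y)}\bP_n(\cdot,dy)$ (the nested structure prevents such a decomposition unless $g$ is constant), but, as you anticipate in your final paragraph, bounding $g$ between its infimum and supremum at each layer yields exactly the stated span estimate, which is also how the paper's displayed inequality should be read.
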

\begin{proof}
Due to Theorem~\ref{loccontr} it is sufficient to show that the iterated sequence of MPE operators is bounded, i.e. there exists $M\in\bR$ such that $\| T^n0\|_{\textrm{sp}}<M$ for $n\in\bN$. For any $f\in C(E)$ and $x,x'\in E$ we get 
\begin{align}
T_nf(x)-T_nf(x') & \leq 2n\|g\|_{\textrm{sp}}+ \ln \frac{\int_E e^{f(y)}\mathbb{P}_n(x,dy)}{\int_E e^{f(y)}\mathbb{P}_n(x',dy)}\nonumber  \\
 & \leq 2n\|g\|_{\textrm{sp}}+ \ln \left[L+\frac{\int_E e^{f(y)}\left[\mathbb{P}_n(x,dy)-L\mathbb{P}_n(x',dy)\right]}{\int_E e^{f(y)}\mathbb{P}_n(x',dy)}\right]\nonumber  \\
& \leq 2n \|g\|_{\textrm{sp}} + \ln L.
\end{align}

Thus, for any $f\in C(E)$ we get $\|T_nf\|_{\textrm{sp}}\leq   n\|g\|_{\textrm{sp}} + \tfrac{1}{2}\ln L$. In particular, this implies that the iterated sequence $(T^{kn}0)_{k\in\bN}$, is jointly bounded in the span norm, and so is $(T^{n}0)_{n\in\bN}$. This concludes the proof.
\end{proof}

While \eqref{A.3} guarantees the existence of MPE solution, it could be seen as restrictive and designed for compact state spaces. In Example~\ref{ex:strong.mixing} we show that in general case \eqref{A.3} is not a necessary condition; note that similar example could be constructed for a dynamics with full invariant measure support.

\begin{example}[Existence of MPE without multi-step strong mixing]\label{ex:strong.mixing}
Let $E=\bN$ be a denumerable state space and let us consider a Markov chain with transition matrix
\[
P:=\begin{bmatrix}
1  & 0 & 0 &0 &0 & \ldots\\
\tfrac{1}{2}& 0  &\tfrac{1}{2} &0 &0 & \ldots\\
\tfrac{3}{4}& 0  &0 &\tfrac{1}{4} &0& \ldots\\
\tfrac{7}{8}& 0  &0 &0 &\tfrac{1}{8}& \ldots\\
\ldots & \ldots&\ldots &\ldots &\ldots & \ldots
\end{bmatrix}.
\]
Then, assumption \eqref{A.3} is not satisfied but MPE solution exists for any $g\in C(E)$.
\end{example}

\begin{proof}
Note that $\bP\in\cP(E)$ corresponding to transition matrix $P$ satisfies \eqref{A.1} with $\Lambda=\tfrac{1}{2}$ but \eqref{A.3} is not satisfied since for any $n\in\bN$ we get $\bP_n(1,\{n+1\})=0$ and $\bP_n(2,\{n+1\})>0$. Let us fix $g\in C(E)$. For simplicity, assume that $g(1)=0$. It is sufficient to show that $\sup_{n\in\bN}\|T^n0\|_{\textrm{sp}}<\infty$. Since for any $n\in\bN$ and $i\in\bN$ we have $T^n0(i) \geq -\|g\| +\ln\tfrac{1}{2}$, it is sufficient to show that $(T^n0)$ is uniformly bounded from above. For $n\in\bN$, we have $T^n0(1)=0$ and consequently, for any $i=2,3,\ldots$, we get
\begin{align*}
T^n0(i) &= g(i) +\ln\left[ (1-2^{-i})+2^{-i}e^{T^{n-1}0(i+1)}\right]\\
&\leq \|g\| +\ln\left[1+2^{-1}e^{T^{n-1}0(i+1)}\right]\\
&\leq \|g\| +\ln\left[1+2^{-1}e^{\|g\|+\ln [1+2^{-2}e^{T^{n-2}0(i+2)}]}\right]\\
& \leq \|g\| +\ln\left[1+2^{-1}e^{\|g\|}+2^{-3}e^{\|g\|} e^{T^{n-2}0(i+2)}\right]\\
& \leq \|g\| +\ln\left[1+2^{-1}e^{\|g\|}+2^{-3}e^{2\|g\|}+2^{-6}e^{2\|g\|} e^{T^{n-3}0(i+3)}\right]\\
& \leq \|g\|+\ln \left[\sum_{k=0}^{\infty}2^{-k(k+1)/2}e^{k\|g\|} \right]\\
& \leq \|g\|+\ln \left[\sum_{k=0}^{\infty}e^{k[\|g\|-(k+1)/2 \cdot \ln 2]} \right].
\end{align*}
Noting that the sequence $(\|g\|-(k+1)/2\cdot\ln 2)$ does not depend on $n\in\bN$, is decreasing, and for sufficiently big $k\in\bN$ we have $(\|g\|-(k+1)/2\cdot\ln 2)<0$, we conclude that $\sup_{n\in\bN} \|T^n\|<\infty$ which implies $\sup_{n\in\bN} \|T^n\|_{\textrm{sp}}<\infty$ due to \eqref{eq:norm.rel}. This implies existence of MPE for $g$ due to Theorem~\ref{loccontr} and Banach's fixed point theorem.
\end{proof}

Next, we show necessary and sufficient conditions for MPE existence that need to hold on the complement of the invariant measure support. 
Please recall that for $\bP\in \bC(E)$ satisfying \eqref{A.1} we know that there exists a unique invariant measure; see \cite{Doob}; we use $\nu\in \cP(E)$ to denote this measure. Also, to ease the notation, we define the first set hitting time as 
\begin{equation}\label{eq:hitting.time}
\tau_B:=\inf\{n\in\bN: x_n\in B\}, \quad B\in\cE.
\end{equation}
In a nutshell, the process must escape rather quickly from any compact set that is outside of the support of the invariant measure; the escape time must be faster than geometric. This intuition is formalised in Proposition~\ref{pr:escape.geometric}.

\begin{proposition}\label{pr:escape.geometric}
Let $\bP\in\bC(E)$ satisfy \eqref{A.1} and let $\nu$ correspond to its invariant measure. Let $E_{\nu}$ be an invariant set and let us assume that a bounded MPE solution on $E_{\nu}$ exists for any $g\in C(E_{\nu})$. Then, the bounded MPE solution on $E$ exists for any $g\in C(E)$ if and only if
\begin{equation}\label{eq:tau.1}
\forall \alpha\in (0,1)\,\exists\, n\in\bN\colon \sup_{x\in E^c_{\nu}}\bP_x[\tau_{E_{\nu}}>n]\leq \alpha^n.
\end{equation}
\end{proposition}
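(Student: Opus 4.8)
The plan is to prove both implications by reducing the dynamics on $E$ to an auxiliary one‑point (absorbing) compactification of $E^c_\nu$ and comparing the iterated MPE operators. Throughout, since a bounded MPE solution on $E_\nu$ exists for every $g\in C(E_\nu)$ and $\bP$ restricted to $E_\nu$ still satisfies \eqref{A.1}, I may assume (after subtracting the invariant‑set value $\lambda$ and the $E_\nu$‑solution $w$) that the reward is supported on $E^c_\nu$ and that once the chain enters $E_\nu$ it contributes nothing; effectively the only thing that matters is the excursion of the chain in $E^c_\nu$ before $\tau_{E_\nu}$.

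For the ($\Leftarrow$) direction I would fix $g\in C(E)$ and show $\sup_{n}\|T^n 0\|_{\textrm{sp}}<\infty$, which by Theorem~\ref{loccontr} and Banach's fixed‑point theorem gives the solution. The key estimate is a strong‑Markov decomposition: iterating \eqref{eq:3:bellman.av} and splitting on the value of $\tau:=\tau_{E_\nu}$,
\[
T^n 0(x) = \mu_x\!\left(\textstyle\sum_{i=0}^{n-1} g(x_i) + \mathbf{(\text{boundary term at }n)}\right),
\]
and for $x\in E^c_\nu$ one bounds $\mu_x(\sum_{i=0}^{(\tau\wedge n)-1} g(x_i))$ using the dual representation \eqref{eq:3:baseq}: on the event $\{\tau>k\}$ the contribution is at most $k\|g\|$, while \eqref{eq:tau.1} forces $\bP_x[\tau>k]\le \alpha^k$, so after the Esscher change of measure the geometric decay of $\bP_x[\tau>k]$ beats the exponential growth $e^{2k\|g\|}$ provided $\alpha$ is chosen below $e^{-2\|g\|}$ — exactly as in the geometric‑series argument in the proof of Theorem~\ref{th:sharp1}. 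Once the excursion in $E^c_\nu$ contributes a bounded amount uniformly in $x$ and $n$, the already‑assumed bounded solution on $E_\nu$ handles the rest, so $\|T^n0\|_{\textrm{sp}}$ stays bounded.

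For the ($\Rightarrow$) direction I argue by contraposition: assume \eqref{eq:tau.1} fails, i.e. there is $\alpha\in(0,1)$ such that for every $n$ there exists $x_n\in E^c_\nu$ with $\bP_{x_n}[\tau_{E_\nu}>n]>\alpha^n$. I then exhibit a single $g\in C(E)$ for which no bounded MPE solution exists. The natural choice is a reward that is (say) a positive constant $c$ on a suitable compact piece of $E^c_\nu$ and zero on $E_\nu$, with $c$ chosen so that $e^{c}\alpha>1$; then along the points $x_n$ the partial entropic sums satisfy, via Jensen/the dual bound applied to the lower bound $\mu_{x_n}(\sum g(x_i))\ge \ln(e^{nc}\bP_{x_n}[\tau>n]) \ge n(c+\ln\alpha) \to\infty$, contradicting Proposition~\ref{pr:lambda.is.good} which would force $\frac1n\mu_x(\sum_{i=0}^{n-1}g(x_i))\to\lambda$ uniformly. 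A mild technical point is that $x_n$ may drift to infinity and the "compact piece" of $E^c_\nu$ on which $g$ is constant must be arranged to contain enough of the relevant excursions; this is handled by taking $g$ a continuous function that is $\ge c$ on larger and larger compacta exhausting $E^c_\nu$, or by first passing to a subsequence localising the $x_n$.

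The main obstacle I anticipate is the bookkeeping in the ($\Leftarrow$) direction: one must carefully separate the excursion in $E^c_\nu$ from the behaviour on $E_\nu$ after $\tau_{E_\nu}$, because the entropic (non‑linear) operator does not split additively across the hitting time the way an expectation would. The clean way to do this is to use the translation invariance and monotonicity of $\mu_x$ together with the strong Markov property in the form $T^n0(x)=\mu_x\big(\sum_{i=0}^{(\tau\wedge n)-1}g(x_i)+\mathbf{1}_{\{\tau\le n\}}\,[T^{\,n-\tau}0](x_\tau)+\mathbf{1}_{\{\tau>n\}}\cdot 0\big)$, then bound $[T^{n-\tau}0](x_\tau)$ by the (already bounded) span on $E_\nu$ plus a constant, and feed the remaining $E^c_\nu$‑excursion into the geometric estimate above. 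Getting the quantifiers right — choosing $n$ from \eqref{eq:tau.1} after $\|g\|$ is fixed — is where the sharpness "faster than geometric" is actually used.
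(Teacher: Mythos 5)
Your ($\Leftarrow$) argument is, in substance, the paper's proof: fix $g$, choose $\alpha$ of the order $e^{-2\|g\|}$ (the paper takes $\alpha=\exp(-2\|g-\lambda\|-1)$, with $\lambda$ the value on $E_\nu$), obtain $n_0$ from \eqref{eq:tau.1}, and play the geometric decay of $\bP_x[\tau_{E_\nu}>n_0]$ against the worst-case exponential growth $e^{n_0\|g-\lambda\|}$ of the reward on the escape event, while the assumed bounded solution on $E_\nu$ controls the post-hitting contribution; boundedness of the span of the iterates plus Theorem~\ref{loccontr} and Banach's fixed point theorem finish. The bookkeeping you flag (the entropic operator not splitting at $\tau$) is handled in the paper not by a strong-Markov split of $T^n0$ at the hitting time, but by iterating $T_\lambda:=T-\lambda$ in blocks of length $n_0$ starting from the extension $w_0$ of the $E_\nu$-solution by zero, so that $T_\lambda^n w_0=w$ on the invariant set for all $n$; splitting each block on $\{\tau_{E_\nu}\le n_0\}$ versus $\{\tau_{E_\nu}>n_0\}$ gives the affine recursion $e^{\|T_\lambda^{(k+1)n_0}w_0\|}\le a\,e^{\|T_\lambda^{kn_0}w_0\|}+C$ with $a<1$, which is the rigorous form of your sketch. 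So this half is essentially identical to the paper.

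The gap is in your ($\Rightarrow$) direction. The key inequality you invoke, $\mu_{x_n}\bigl(\sum_{i=0}^{n-1}g(x_i)\bigr)\ge \ln\bigl(e^{nc}\,\bP_{x_n}[\tau_{E_\nu}>n]\bigr)$, needs $g\ge c$ at \emph{every} point visited before $\tau_{E_\nu}$, whereas the negation of \eqref{eq:tau.1} only guarantees that the path stays in $E^c_\nu$. A continuous $g$ vanishing on the (closed) set $E_\nu$ is necessarily small near its boundary, and the lingering excursions witnessing the failure of \eqref{eq:tau.1} may spend all their time in that thin shell, contributing essentially nothing to $\sum g$; your suggested fixes (a single $g$ that is $\ge c$ on compacta exhausting $E^c_\nu$, or localising the $x_n$ by a subsequence) do not repair this, since no continuous $g$ vanishing on $E_\nu$ can be bounded below by $c$ on sets exhausting $E^c_\nu$, and localising the starting points does not localise the excursions away from $E_\nu$. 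This is exactly why the paper's negative result, Proposition~\ref{pr:no-solution}, assumes geometric lingering in a compact set $K$ with $\nu(K_\epsilon)=0$, i.e.\ at positive distance $\epsilon$ from the support, so that a continuous reward equal to $\ln(2/h)$ on $K$ and $0$ outside $K_\epsilon$ is available (and the paper itself only sketches how the ($\Rightarrow$) implication follows from that proposition, rather than proving it from scratch). To complete your route you would have to show first that failure of \eqref{eq:tau.1} produces geometric lingering at positive distance from $E_\nu$, or otherwise rework the lower bound with a reward adapted to the distance from $E_\nu$; as written, that step is missing. The complementary upper bound in your contradiction (that $\lambda$ must equal the invariant-set value, here $0$, by the uniformity in Proposition~\ref{pr:lambda.is.good} and invariance of $E_\nu$) is fine.
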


\begin{proof}
The fact that the existence of bounded MPE solution on $E$ implies  condition~\eqref{eq:tau.1} will follow directly from Proposition~\ref{pr:no-solution}; for brevity, we skip the direct proof and focus on the opposite implication. Let $g\in C(E)$. Without loss of generality, we can assume that $g\not\equiv 0$ and $\inf_{x\in E}g(x)\geq 0$. Let $\alpha:=\exp(-2\|g-\lambda\|-1)$. From \eqref{eq:tau.1} we know that there exists $n_0\in\bN$ such that
\[
\sup_{x\in E^c_{\nu}}\bP_x[\tau_{E_{\nu}}>n_0]\leq \alpha^{n_0}.
\]
 Let $(w,\lambda)\in C(E_{\nu})\times \bR$ denote the MPE solution for $g$ on the invariant set $E_{\nu}$ such that $\inf_{x\in E_{\nu}}w(x)=0$. Note that $\lambda\geq 0$ since $g$ is non-negative. For $x\in E$, let
\[
w_0(x):=
\begin{cases}
w(x) & \textrm{ if } x\in E_{\nu}\\
0 & \textrm { if } x\not\in E_{\nu}.
\end{cases}
\]
and let $T_{\lambda}(\cdot)=T(\cdot)-\lambda$. First, recalling that $E_{\nu}$ is an invariant set, for any $x\in E_{\nu}$ we get $T_{\lambda}w_0(x)=w(x)$ and consequently $T^nw_0(x)=w(x)$, for $n\in\bN$. Thus, for any $k\in\bN$ and $x\in E$,  recalling that $\inf_{x\in E_{\nu}}w(x)=0$, we get
\begin{align}
T_{\lambda}^{(k+1)n_0}w_0(x) & \geq \inf_{x\in E}\mu_x\left(\sum_{i=0}^{n_0-1}(g(x_i)-\lambda)+T_{\lambda}^{kn_0}w_0(x_n)\right)\nonumber\\
& \geq -n_0\|g-\lambda\|+\inf_{x\in E}\left(\mu_x\left(\1_{\{\tau_{E_\nu}\leq n_0\}}w(x_n)+\1_{\{\tau_{E_\nu}> n_0\}}T_{\lambda}^{kn_0}w_0(x_n)\right)\right)\nonumber\\
& \geq -n_0\|g-\lambda\| +\inf_{x\in E}\left(\mu_x\left(\1_{\{\tau_{E_\nu}> n_0\}}T_{\lambda}^{kn_0}w_0(x_n)\right)\right)\nonumber\\
& \geq -n_0\|g-\lambda\| +\inf_{x\in E}\ln \left(\bP_x[\tau_{E_\nu}\leq n_0]e^0+\bP_x[\tau_{E_\nu}> n_0]e^{-\|T_{\lambda}^{kn_0}w_0 \|}\right)\nonumber\\
& \geq -n_0\|g-\lambda\| +\ln (1-\alpha^{n_0}).\label{eq:T.lambda.1}
\end{align}
On the other hand, for any $k\in\bN$ and $x\in E$, we get
\begin{align}
T_{\lambda}^{(k+1)n_0}w_0(x) & \leq n_0\|g-\lambda\|+\sup_{x\in E}\left(\mu_x\left(\1_{\{\tau_{E_\nu}\leq n_0\}}w(x_n)+\1_{\{\tau_{E_\nu}> n_0\}}\|T_{\lambda}^{kn_0}w_0\|\right)\right)\nonumber\\
&\leq n_0\|g-\lambda\| +\ln (e^{\|w\|}+\alpha^{n_0}e^{\|T_{\lambda}^{kn_0}w_0\|}).\label{eq:T.lambda.2}
\end{align}
Now, let $a:=e^{-n_0(\|g-\lambda\|+1)}$, and $C:=e^{n_0\|g-\lambda\|}(e^{\|w\|}+(1-\alpha^{n_0})^{-1})$. Then, combining \eqref{eq:T.lambda.1} with \eqref{eq:T.lambda.2}, noting that $a<1$ as well as $C<\infty$, and taking the exponent, for any $k\in\bN$, we get
\begin{align}
e^{\|T_{\lambda}^{(k+1)n_0}w_0\|} &\leq e^{n_0\|g-\lambda\|}\left(e^{\|w\|}+ \alpha^{n_0}e^{\|T_{\lambda}^{kn_0}w_0\|}\right)+e^{n_0\|g-\lambda\|-\ln(1-\alpha^{n_0})}\nonumber\\
&=e^{n_0\|g-\lambda\| +n_0(-2\|g-\lambda\|-1)}e^{\|T_{\lambda}^{kn_0}w_0\|}+C\nonumber\\
& \leq ae^{\|T_{\lambda}^{kn_0}w_0\|}+C\nonumber\\
& \leq \sum_{i=0}^{k}a^iC +a^{k}e^{\|T_{\lambda}^{n_0}w_0\|}\nonumber\\
& \leq \frac{C}{1-a}+e^{\|T_{\lambda}^{n_0}w_0\|}\label{eq:T.lambda.3}
\end{align}
Noting that $\|T^{n_0}_{\lambda}w_0\|$ is bounded (as $T_{\lambda}$ is a $C$-Feller operator), we conclude that
\[
\sup_{k\in\bN}\|T_{\lambda}^{kn_0}w_0\|<\infty
\]
which implies $\sup_{n\in\bN}\|T_{\lambda}^{n}w_0\|<\infty$ and consequently $\sup_{n\in\bN}\|T^{n}w_0\|_{\textrm{sp}}<\infty$. Using Theorem~\ref{loccontr} and Banach's fixed point theorem, we get there exists $\tilde w\in C(E)$ and $\tilde \lambda\in\bR$ that satisfies MPE for $g$ on $E$, which concludes the proof.
\end{proof}
Proposition~\ref{pr:escape.geometric} allows us to immediately recover necessary and sufficient conditions for (bounded) MPE solution existence for a finite state space. Namely, the probability kernel must be such that it has a unique invariant measure and all states which lie outside of its support must be transient with return probability equal to zero as otherwise condition \eqref{eq:tau.1} would not hold. Assuming that the number of states is equal to $N\in\bN$, this immediately implies that after at most $N$-steps the process will be in the invariant measure support with probability 1. We refer to \cite{CavHer2009} where this characterisation is provided and discussed.

\begin{remark}[Difference between finite and denumerable case]
It is worth to note, that Example~\ref{ex:strong.mixing} illustrates that finite state space necessary and sufficient conditions for MPE existance stated in \cite{CavHer2009} do not transfer directly to denumerable spaces since we do not get into the support of the invariant measure in finite number of steps (with probability one) while the MPE solution always exists.
\end{remark}

\section{Geometric propagation and negative existence results}\label{S:negative}
In this section we investigate how the geometric process propagation interacts with the existence of MPE solution for any $g\in C(E)$. While most results presented in this section do not require assumption~\eqref{A.1}, we will typically assume that $\bP\in\bC(E)$ is such that it has a unique invariant measure $\nu\in \cP(E)$; recall that invariant measure existence is effectively implied by \eqref{A.1}.

We start with a simple lemma which states that geometric stay in a subset of states for which the reward value is high set a lower bound on the optimal value. 

\begin{lemma}\label{lm:geom}
Let $g\in C(E)$ be non-negative and such that it has MPE solution $(w,\lambda)$. Moreover, assume there is $k>0$, $K\subset E$ and $\alpha\in (0,1)$ such that $g(x)\geq k$, $x\in K$, and $\sup_{x\in E}\bP[\tau_{K^c}>n]>\alpha^n$, $n\in\bN$. Then, we have $\lambda\geq k+\ln\alpha$.
\end{lemma}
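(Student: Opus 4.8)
The plan is to exploit the MPE iteration formula together with the hypothesis that the process stays in the high-reward set $K$ with slower-than-geometric decay. First I would recall that iterating the MPE, as in the proof of Proposition~\ref{pr:lambda.is.good}, gives for every $x\in E$ and $n\in\bN$
\[
w(x)=\mu_x\left(\sum_{i=0}^{n-1}(g(x_i)-\lambda)+w(x_n)\right).
\]
Since $g$ is non-negative, $w$ is bounded, and (as in Lemma~\ref{lm:geom}'s setting) $\lambda\geq 0$, I would then bound the entropic utility from below by restricting the expectation to the event $\{\tau_{K^c}>n\}$, i.e. the event that $x_0,\ldots,x_n$ all lie in $K$. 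On that event $g(x_i)\geq k$ for $i=0,\ldots,n-1$, so
\[
\sum_{i=0}^{n-1}(g(x_i)-\lambda)+w(x_n)\geq n(k-\lambda)-\|w\|.
\]

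Next I would take the supremum over $x\in E$ and use monotonicity of $\mu_x$: discarding the contribution of the complementary event (which is non-negative only after a further bookkeeping step, so more precisely I would write $e^{w(x)}=\bE_x[\exp(\cdots)]\geq \bE_x[\1_{\{\tau_{K^c}>n\}}\exp(\cdots)]\geq e^{n(k-\lambda)-\|w\|}\bP_x[\tau_{K^c}>n]$). Taking $x=x_n^*$ along a sequence (or the supremum) for which $\bP[\tau_{K^c}>n]>\alpha^n$, this yields
\[
e^{\|w\|}\geq \sup_{x\in E}e^{w(x)}\geq e^{n(k-\lambda)-\|w\|}\,\alpha^n,
\]
hence $2\|w\|\geq n(k-\lambda)+n\ln\alpha = n(k-\lambda+\ln\alpha)$ for all $n\in\bN$. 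Letting $n\to\infty$ forces $k-\lambda+\ln\alpha\leq 0$, i.e. $\lambda\geq k+\ln\alpha$, which is the claim.

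The main obstacle I anticipate is the careful handling of the event $\{\tau_{K^c}>n\}$ versus $\{\tau_{K^c}\geq n\}$ and the precise meaning of ``$\sup_{x\in E}\bP[\tau_{K^c}>n]>\alpha^n$'' as a statement about the starting point: one must choose, for each $n$, a starting point $x$ (or take a supremum) so that both the geometric lower bound on the survival probability and the lower bound $n(k-\lambda)-\|w\|$ on the integrand apply simultaneously, and this interacts with the fact that $w$ is only bounded (not sign-definite). The inequalities $g(x_i)\geq k$ require $x_i\in K$ for $i=0,\dots,n-1$, which is exactly what $\tau_{K^c}>n-1$ guarantees; a harmless index shift (replacing $n$ by $n+1$) handles the endpoint $x_n$, since the bound on $w(x_n)$ only uses $\|w\|<\infty$ and needs no information about $x_n$. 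Everything else is routine monotonicity and translation-invariance of the entropic utility together with a geometric-growth argument.
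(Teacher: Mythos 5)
Your proof is correct and follows essentially the same route as the paper: iterate the MPE, lower-bound the entropic utility by restricting to the event $\{\tau_{K^c}>n\}$ where $g\geq k$ and the survival probability exceeds $\alpha^n$, absorb $w$ into a $2\|w\|$ constant, divide by $n$ and let $n\to\infty$. Your extra care about the index shift and the choice of starting points attaining the supremum only makes explicit what the paper's one-line estimate glosses over.
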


\begin{proof}
Using MPE iteration, for any $n\in\bN$, we get
\[
n\lambda \geq \sup_{x\in E}\mu_x\left(\sum_{i=0}^{n-1}g(x_i)\right)+\|w\|\geq \ln \left[\alpha^{n}e^{nk}\right]+\|w\|=n(k+\ln \alpha)+\|w\|.
\]
Dividing both sides by $n$ and taking the limit as $n\to\infty$, we get $\lambda\geq k+\ln\alpha$.
\end{proof}

In Proposition~\ref{pr:escape.geometric} we stated that the process must escape rather quickly from any compact set that is outside of the support of the invariant measure. Here, we formulate a negative results linked to this fact.

\begin{proposition}\label{pr:no-solution}
Let $\bP\in\bC(E)$ have a unique invariant measure $\nu\in \cP(E)$. Assume there exists a compact set $K\in\cE$ for which
\begin{enumerate}
\item There is $\epsilon>0$, such that $\nu(K_{\epsilon})=0$, where $K_{\epsilon}:=\{x\in E\colon \rho(x,K)\leq \epsilon\}$; 
\item There is $h\in (0,1)$, such that $\sup_{x\in K}\bP_x\left[\tau_{K^c}> n \right]>h^n$, for $n\in\bN$.
\end{enumerate}
Then, there exists $g\in C(E)$ for which bounded solution to MPE does not exist.
\end{proposition}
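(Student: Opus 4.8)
The plan is to build a reward function $g$ that is large on the compact set $K$ and (nearly) zero away from it, and then invoke Lemma~\ref{lm:geom} to force the long-run constant $\lambda$ to be large, while simultaneously showing that $\lambda$ must be bounded by a quantity governed by the invariant measure $\nu$ — leading to a contradiction. Concretely, for a parameter $k>0$ to be chosen, I would take a continuous function $g=g_k\in C(E)$ with $0\le g_k\le k$, $g_k\equiv k$ on $K$, and $g_k\equiv 0$ outside $K_{\epsilon}$; such a function exists by Urysohn-type arguments on the locally compact metric space $(E,\rho)$ since $K$ is compact and $K_\epsilon$ is its closed $\epsilon$-neighbourhood. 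Suppose toward contradiction that for \emph{every} $k$ a bounded MPE solution $(w_k,\lambda_k)$ exists.

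First I would apply Lemma~\ref{lm:geom} with this $g_k$, the set $K$, the constant $k$, and $\alpha:=h$ (which is legitimate by hypothesis (2)), obtaining $\lambda_k\ge k+\ln h$. Since $\ln h$ is a fixed finite negative number, this shows $\lambda_k\to\infty$ as $k\to\infty$. Second, I would derive a competing \emph{upper} bound on $\lambda_k$ that does not blow up. By Proposition~\ref{pr:lambda.is.good}, $\lambda_k=\lim_{n\to\infty}\tfrac1n\mu_x\bigl(\sum_{i=0}^{n-1}g_k(x_i)\bigr)$, so it suffices to control this limit. Using the dual representation \eqref{eq:3:baseq} of the entropic utility — or more elementarily the bound $\mu_x(\varphi)\le \sup\varphi$ together with a refinement that separates the contribution on $K_\epsilon$ — the key point is that $g_k$ is supported on $K_\epsilon$, a set of $\nu$-measure zero. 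The ergodic behaviour of the chain (invariant measure $\nu$, convergence of occupation measures) forces the time spent in $K_\epsilon$ to be asymptotically negligible, so that the risk-neutral average $\tfrac1n\E_x\sum_{i=0}^{n-1}g_k(x_i)$ tends to $\nu(g_k)=0$; the entropic (risk-sensitive) average can exceed this but, with the right quantitative estimate on how fast the chain forgets $K_\epsilon$, should still be bounded \emph{uniformly in $k$} (the relative-entropy penalty $\mathbb H[\mu\|\mathbb P(x,\cdot)]$ prevents the supremizing measure in \eqref{eq:3:baseq} from concentrating on $K_\epsilon$ for free). Combining $\lambda_k\le C$ for all $k$ with $\lambda_k\ge k+\ln h\to\infty$ yields the contradiction, so for $k$ large enough the MPE solution for $g_k$ cannot exist.

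The main obstacle is the uniform upper bound on $\lambda_k$: one must show that making $g$ enormous on a $\nu$-null set does not make the entropic long-run average large, and this is exactly where the risk-sensitive case is delicate (indeed, this is the phenomenon the paper is built around). The clean way to handle it is to quantify the escape from $K_\epsilon$: since $\nu(K_\epsilon)=0$, for any $\delta>0$ there is $m\in\bN$ with $\sup_{x\in E}\bP_x[x_m\in K_\epsilon]\le\delta$ (by the mixing/convergence of $\bP_n(x,\cdot)$ to $\nu$ in total variation, available under the standing assumptions); then an $m$-step iteration of the MPE operator, splitting on whether $x_m\in K_\epsilon$, gives a recursion of the form $e^{\|T^{(j+1)m}_\lambda w_0\|}\le a\,e^{\|T^{jm}_\lambda w_0\|}+C$ with $a<1$ and $C<\infty$ \emph{independent of $k$} once $\delta$ is chosen so that $\delta e^{2m\|g_k-\lambda_k\|}<1$ — but this last inequality fails for large $k$, which is precisely the source of non-existence. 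Turning this around: if a solution existed, the recursion would force $\|T^{n}_\lambda w_0\|$ bounded, hence (via Proposition~\ref{pr:lambda.is.good}) $\lambda_k$ bounded by a $k$-independent constant, contradicting Lemma~\ref{lm:geom}. I would organise the final write-up around this recursion, choosing the interplay of the parameters $k$, $\epsilon$, $m$, $\delta$ carefully and citing the total-variation convergence $\sup_x\|\bP_n(x,\cdot)-\nu\|_{\textrm{var}}\to0$ (Remark/Theorem in \cite{HerLas1999}) for the $\nu(K_\epsilon)=0$ step.
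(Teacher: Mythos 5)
Your lower-bound half is fine and matches the paper: with $g_k\equiv k$ on $K$ and hypothesis (2), Lemma~\ref{lm:geom} (with $\alpha=h$) gives $\lambda_k\ge k+\ln h$ whenever a bounded solution exists. The genuine gap is the other half: the uniform-in-$k$ upper bound on $\lambda_k$ is never actually established, and the route you sketch cannot establish it. First, the quantitative escape estimate $\sup_{x\in E}\bP_x[x_m\in K_\epsilon]\le\delta$ is justified by appealing to uniform total-variation convergence of $\bP_n(x,\cdot)$ to $\nu$, but Proposition~\ref{pr:no-solution} assumes only a unique invariant measure, not \eqref{A.1} or any mixing, so this is not ``available under the standing assumptions.'' Second, and more fundamentally, the claim that negligible (expected) occupation of the $\nu$-null set $K_\epsilon$ forces the \emph{entropic} average to be bounded uniformly in $k$ is false under the stated hypotheses: since $K\subset K_\epsilon$, hypothesis (2) and your own Lemma~\ref{lm:geom} computation show that from starting points in $K$ the time-averaged entropy of $\sum g_k(x_i)$ is at least $k+\ln h$, which blows up with $k$; small expected occupation does not control the exponential moment of $k$ times the occupation. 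Your attempted repair via the recursion $e^{\|T^{(j+1)m}_\lambda w_0\|}\le a\,e^{\|T^{jm}_\lambda w_0\|}+C$ is circular as written: you concede that the condition $\delta e^{2m\|g_k-\lambda_k\|}<1$ needed to run it fails for large $k$, and the mere existence of a bounded solution does not validate the recursion (in Proposition~\ref{pr:escape.geometric} that recursion is a tool to \emph{prove} existence, not a consequence of it), so ``turning this around'' does not yield $\lambda_k\le C$.

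The paper closes this gap differently, and with a single fixed reward rather than a family: take $g=\ln(2/h)$ on $K$, interpolated to $0$ on the collar and $\equiv 0$ off $K_\epsilon$. The upper bound is obtained \emph{pathwise}, not in the mean: choosing a starting point $x_0$ in the support of $\nu$, the fact that $\nu(K_\epsilon)=0$ (together with invariance and the Feller property) gives that the trajectory from $x_0$ puts no mass on the region where $g>0$, so $\mu_{x_0}\bigl(\sum_{i=0}^{n-1}g(x_i)\bigr)=0$ for every $n$; combined with the iterated MPE identity and boundedness of $w$ (which, as in Proposition~\ref{pr:lambda.is.good}, makes $\lambda$ the common growth rate from \emph{every} starting point) this yields $\lambda\le 0$, while the lower bound from $x\in K$ gives $\lambda\ge\ln 2$ — a contradiction without any $k\to\infty$ limit. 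If you want to keep your family $g_k$, the fix is to replace the mixing recursion by exactly this support-point, a.s.-avoidance argument, which gives $\lambda_k\le 0$ for all $k$ and immediately contradicts $\lambda_k\ge k+\ln h$.
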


\begin{proof}
Assume there is such $K\in\cE$ and let $\epsilon,h\in (0,1)$ denote the corresponding constants. Let $g\in C(E)$ be given by
\begin{equation}
g(x)=
\begin{cases}
\ln (2/h) & \textrm{if } \rho(x,K)=0,\\
\ln (2/h)\cdot(1-\rho(x,K)/\epsilon)& \textrm{if } \rho(x,K)\in (0,\epsilon)\\
0 & \textrm {if } \rho(x,K)\geq \epsilon.
\end{cases}
\end{equation}
Assume that a bounded MPE solution for $g$ exists; let us denote the solution by $w\in C(E)$ and $\lambda\in\bR$. Using iteration we get
\begin{equation}\label{eq:mp.at3.2}
n\lambda=\mu_x\left(\sum_{i=0}^{n-1}g(x_i)+w(x_n)-w(x)\right),\quad x\in E, n\in\bN.
\end{equation}
Using \eqref{eq:mp.at3.2}, recalling that $\nu(K_{\epsilon})=0$, and taking any $x_0\in E$ that is in the support of the invariant measure $\nu$, we get
\begin{align*}
n\lambda &\leq \inf_{x\in E}\mu_x\left(\sum_{i=0}^{n-1}g(x_i)\right)+2\|w\|_{\textrm{sp}}\\
&\leq \mu_{x_0}\left(\sum_{i=0}^{n-1}\1_{K_{\epsilon}^{c}}(x_i)g(x_i)\right)+2\|w\|_{\textrm{sp}}\\
& \leq 2\|w\|_{\textrm{sp}}.
\end{align*}
Thus, letting $n\to\infty$, we conclude that $\lambda\leq 0$. On the other hand, using \eqref{eq:mp.at3.2} and property $\sup_{x\in K}\bP_x\left[\tau_{K^c}> n \right]>h^n$, for $n\in\bN$, we get
\begin{align*}
n\lambda &\geq \sup_{x\in E} \mu_x\left(\sum_{i=0}^{n-1}g(x_i)\right) -2\|w\|_{\textrm{sp}}\\
& \geq \sup_{x\in K}\mu_x\left(\ln \tfrac{2}{h}\cdot\sum_{i=0}^{n-1}\1_{K}(x_i)\right)-2\|w\|_{\textrm{sp}}\\
&\geq \ln\left[e^{\ln 2/h \cdot (n-1)}\cdot \sup_{x\in K}\bP_x[\tau_{K^c}> n-1]\right]-2 \|w\|_{\textrm{sp}}\\
&\geq \ln \frac{2^{n-1}h^{n-1}}{h^{n-1}}-2\|w\|_{\textrm{sp}}\\ &=(n-1) \ln 2-2\|w\|_{\textrm{sp}}.
\end{align*}
Thus, letting $n\to\infty$ we get $\lambda\geq\ln 2$, which leads to contradiction.
\end{proof}


In the remaining part of this section let us show a series of more technical results which illustrate why the MPE solution is typically unbounded under very generic conditions linked to so called $C_0$-Feller property and how it interacts with mixing, i.e. assumption \eqref{A.1'}. To do this, we need some additional notation that will allow us to study dynamic propagation on an arbitrary compact ball. For any $x\in E$ and $\epsilon\in\bR_+$ let
\[
B(x,\epsilon):=\left\{y\in E: \rho(x,y)\leq \epsilon\right\}.
\]
With slight abuse of notation, given a fixed $\bar x\in E$ and $\eta>1$ till the end of this section we will use notation
\begin{align*}
B & :=B(\bar x, \eta),\\
g_m(x) &:=[m\cdot \rho\left(x,B(\bar{x},\eta-\tfrac{1} {m})\right)]\wedge 1, \quad m\in\bN.
\end{align*}
and use $\lambda_m \in\bR$ and $w_m\colon E\to \bR$ to denote a generic (i.e. not necessarily bounded) solution to MPE equation  \eqref{eq:3:bellman.av} for $g_m$ assuming it exists. Note that $g_m\in C(E)$ and for any $x\in E$ and $m\in\bN$ we know that $1\geq g_m(x)\geq \1_{\bar{B}(\bar{x},\eta)}(x)$, $g_m(\cdot)$ is decreasing, and $g_m(x) \searrow \1_{\bar{B}^c(\bar{x},\eta)}(x)$ as $m\to\infty$. Also, recall that $\tau_B$ denotes the first hitting time to $B$ for the underlying process, see \eqref{eq:hitting.time}.

\begin{proposition}\label{pr:st1} Fix $\bar x\in E$, $\eta>1$, and $m\in\bN$. Assume $g_m\in C(E)$ admits bounded MPE solution $(w_m,\lambda_m)$. Then, we have
\begin{align}
\sup_{x\in E}\mu_x(\tau_{B}(1-\lambda_m)) &\leq 2\|w_m\|+1,\label{eq:ne2}\\
\sup_{x\in E} \bP_x\left[\tau_{B}>n\right] & \leq e^{2\|w_m\|+1}e^{-(1-\lambda_m)n}.\label{eq:ne2'}
\end{align} 
\end{proposition}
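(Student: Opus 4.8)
The plan is to build the exponential martingale naturally attached to the MPE, apply optional stopping at the truncated hitting time $\tau_B\wedge n$, and exploit the pointwise bound $g_m\geq\1_{B^c}$ together with $g_m\geq 0$ to read off the exponential moment estimate for $\tau_B$; the second claim \eqref{eq:ne2'} will then be a one-line consequence of \eqref{eq:ne2}.

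First I would record the normalisation $1-\lambda_m\geq 0$. Writing $\bar w:=\sup_{x\in E}w_m(x)<\infty$, the MPE $w_m(x)=g_m(x)-\lambda_m+\mu_x(w_m)$, together with $g_m\leq 1$ and $\mu_x(w_m)\leq\bar w$ (monotonicity and translation invariance of $\mu_x$), gives $w_m(x)\leq 1-\lambda_m+\bar w$ for every $x$, and taking the supremum over $x$ yields $\lambda_m\leq 1$. Next, for $k\in\bN$ set $M_k:=\exp\bigl(\sum_{i=0}^{k-1}(g_m(x_i)-\lambda_m)+w_m(x_k)\bigr)$, so that $M_0=e^{w_m(x)}$ under $\bP_x$. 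Using the Markov property and the MPE in the form $\mu_{x_k}(w_m)=w_m(x_k)-g_m(x_k)+\lambda_m$, one checks that $\bE_x[M_{k+1}\mid\sigma(x_0,\dots,x_k)]=M_k$; since $w_m$ and $g_m$ are bounded, $(M_k)$ is a genuine martingale and $M_{\tau_B\wedge n}$ is bounded, so optional stopping at the bounded stopping time $\tau_B\wedge n$ gives $\bE_x[M_{\tau_B\wedge n}]=e^{w_m(x)}$.

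The core step is the lower bound on $M_{\tau_B\wedge n}$. At each of the instants $1,\dots,(\tau_B\wedge n)-1$ the chain lies in $B^c$, where $g_m\equiv 1$, and $g_m(x_0)\geq 0$, so $\sum_{i=0}^{(\tau_B\wedge n)-1}g_m(x_i)\geq(\tau_B\wedge n)-1$ and hence
\[
M_{\tau_B\wedge n}\geq\exp\bigl((\tau_B\wedge n)(1-\lambda_m)-1-\|w_m\|\bigr).
\]
Taking expectations and using $\bE_x[M_{\tau_B\wedge n}]=e^{w_m(x)}\leq e^{\|w_m\|}$ gives $\bE_x\bigl[e^{(1-\lambda_m)(\tau_B\wedge n)}\bigr]\leq e^{2\|w_m\|+1}$, uniformly in $x\in E$ and $n\in\bN$. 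Since $1-\lambda_m\geq 0$, the integrand increases to $e^{(1-\lambda_m)\tau_B}$ as $n\to\infty$, so monotone convergence yields $\sup_{x\in E}\mu_x(\tau_B(1-\lambda_m))=\sup_{x\in E}\ln\bE_x[e^{(1-\lambda_m)\tau_B}]\leq 2\|w_m\|+1$, which is \eqref{eq:ne2}. Then \eqref{eq:ne2'} follows by Markov's inequality applied to the nonnegative variable $e^{(1-\lambda_m)\tau_B}$: $\bP_x[\tau_B>n]\leq e^{-(1-\lambda_m)n}\bE_x[e^{(1-\lambda_m)\tau_B}]\leq e^{2\|w_m\|+1}e^{-(1-\lambda_m)n}$.

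The step I expect to demand the most care is the bookkeeping behind the displayed lower bound on $M_{\tau_B\wedge n}$: one must handle uniformly the two cases $\tau_B\wedge n=\tau_B\leq n$ and $\tau_B\wedge n=n<\tau_B$, and one must remember that the starting point $x_0$ may fail to lie in $B^c$, so that only $g_m(x_0)\geq 0$ is available — this is exactly where the additive constant $1$ (rather than $0$) enters the bound. Everything else — the martingale identity, optional stopping, and the monotone passage to the limit — is routine once $1-\lambda_m\geq 0$ is in hand.
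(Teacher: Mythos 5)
Your argument is correct and follows essentially the same route as the paper: the optional-stopping identity $\bE_x[M_{\tau_B\wedge n}]=e^{w_m(x)}$ is exactly the paper's stopped iteration of the MPE (its equation for $w_m(x)$ at time $\tau_B\wedge n$) written in exponential form, and both proofs then use $g_m\equiv 1$ on $B^c$ together with $g_m\geq 0$ at the start, pass to the limit in $n$ (you via $\lambda_m\leq 1$ and monotone convergence, the paper via Fatou and monotonicity of the entropic utility), and obtain \eqref{eq:ne2'} by Chebyshev/Markov. Your explicit verification of the martingale property and of $\lambda_m\leq 1$ merely spells out steps the paper leaves implicit, so there is nothing substantively different to flag.
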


\begin{proof}
First, we show \eqref{eq:ne2}. Iterating MPE equation \eqref{eq:3:bellman.av}, for any $n\in\bN$ and $x\in E$, we get
\begin{equation}\label{eq:ne3}
 w_m(x)=\mu_x\left(\sum_{i=0}^{\tau_B\wedge n-1} (g_m(x_i)-\lambda_m)+w_{m}(x_{\tau_{B}\wedge n})\right).
 \end{equation}
Consequently, letting $n\to \infty$, using Fatou lemma, and monotonicity of entropic utility, for any $x\in E$ we have
\begin{equation}
  w_m(x)+\1_{B}(x)\geq \mu_x\left(\tau_{B}(1-\lambda_m)+w_m(x_{\tau_{B}})\right).
\end{equation}
which implies~\eqref{eq:ne2}. Equality \eqref{eq:ne2'} follows directly from~\eqref{eq:ne2}, which concludes the proof. 
\end{proof}
In the next corollary we show, that Proposition~\ref{pr:st1} implies that the MPE solutions are typically non-bounded under some additional assumptions.
\begin{corollary}\label{cor:st1}
Fix $\bar x\in E$, $\eta>1$, and $m\in\bN$. Assume $g_m\in C(E)$ admits generic MPE solution $(w_m,\lambda_m)$, and we have
\begin{align}
\lambda_m &<1,\label{eq:ne4b}\\
\sup_{n\in\bN}\inf_{x\in E} \bP_{x}\left[\exists_{s\leq  n}\ \  x_s\in B\right] & =0.\label{eq:ne4}
\end{align}
Then, the function $w_m$ must be unbounded.
\end{corollary}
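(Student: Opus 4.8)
The plan is to argue by contradiction: I would assume that $w_m$ is bounded and derive a clash with \eqref{eq:ne4}. Under this assumption $\|w_m\|<\infty$, so $(w_m,\lambda_m)$ is a \emph{bounded} MPE solution for $g_m$ and Proposition~\ref{pr:st1} applies verbatim; in particular the tail bound \eqref{eq:ne2'} yields
\[
\sup_{x\in E}\bP_x\left[\tau_B>n\right]\le e^{2\|w_m\|+1}e^{-(1-\lambda_m)n},\qquad n\in\bN .
\]

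Next I would invoke \eqref{eq:ne4b}: since $\lambda_m<1$ we have $1-\lambda_m>0$, hence $e^{-(1-\lambda_m)n}\to 0$ as $n\to\infty$ and the right-hand side above tends to $0$. Passing to complements gives, for every $n\in\bN$,
\[
\inf_{x\in E}\bP_x\left[\tau_B\le n\right]\ge 1-e^{2\|w_m\|+1}e^{-(1-\lambda_m)n}.
\]
Since $\{\tau_B\le n\}=\{\exists_{s\le n}\ x_s\in B\}$ and the probability $\bP_x[\exists_{s\le n}\ x_s\in B]$ is nondecreasing in $n$ for each fixed $x$ (hence so is its infimum over $x$), taking the supremum over $n\in\bN$ on both sides would give
\[
\sup_{n\in\bN}\inf_{x\in E}\bP_x\left[\exists_{s\le n}\ x_s\in B\right]\ge \sup_{n\in\bN}\left(1-e^{2\|w_m\|+1}e^{-(1-\lambda_m)n}\right)=1 .
\]

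This contradicts \eqref{eq:ne4}, which asserts that the left-hand side equals $0$; hence $w_m$ cannot be bounded, which is the claim. I expect the argument to be short once Proposition~\ref{pr:st1} is available — the only steps needing a word of care are the identification of $\{\tau_B\le n\}$ with $\{\exists_{s\le n}\ x_s\in B\}$ (modulo the harmless question of whether the initial time is counted) and the monotonicity in $n$ that lets one pass the supremum over $n$ through the complementary bound; the role of \eqref{eq:ne4b} is precisely to make the exponential in \eqref{eq:ne2'} decay, so that the geometric tail estimate becomes informative rather than vacuous.
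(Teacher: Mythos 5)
Your proposal is correct and follows essentially the same route as the paper: assume $w_m$ bounded, apply the tail estimate \eqref{eq:ne2'} from Proposition~\ref{pr:st1}, use $\lambda_m<1$ to make the exponential decay, pass to the complement $\inf_{x\in E}\bP_x[\exists_{s\le n}\ x_s\in B]=1-\sup_{x\in E}\bP_x[\tau_B>n]$, and contradict \eqref{eq:ne4}. The only cosmetic difference is that you push the bound all the way to show the supremum over $n$ equals $1$, while the paper stops as soon as one $n$ gives a strictly positive infimum, which already contradicts \eqref{eq:ne4}.
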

\begin{proof}
Assume that $w_m$ is bounded and \eqref{eq:ne4b} holds. Let (large) $n\in\bN$ be such that
\[
e^{2\|w_m\|+1}e^{-(1-\lambda_m)n}<1.
\]
Then, directly from \eqref{eq:ne2'}, we get $\sup_{x\in E} \bP_{x}\left[\tau_B>n\right]<1$, and consequently
\[
    \inf_{x\in E} \bP_{x}\left[
    \exists_{s\leq  n}\ \  x_s\in B\right] =1-\sup_{x\in E} \bP_{x}\left[\tau_B>n\right]>0,
\]
which contradicts \eqref{eq:ne4}.
\end{proof}

Property~\eqref{eq:ne4} from Corollary~\ref{cor:st1} could be seen as a weaker version of the well known assumption in locally compact spaces saying the transition operator $\bP$ transforms the space $C_0(E)$ of continuous bounded functions vanishing at infinity into itself.
Also, Property~\eqref{eq:ne4b} does not seem to be strong as it holds under mild assumptions related to the invariant measure convergence; see Proposition~\ref{pr:st12}.

\begin{proposition}\label{pr:st12}
Fix $\bar x\in E$, $\eta>1$, and $m\in\bN$. Assume $g_m\in C(E)$ admits generic MPE solution $(w_m,\lambda_m)$ and $\bP$ has unique invariant measure $\nu$. Moreover, assume that $\nu(g_m):=\int_E g_m d \nu<1$, and there exists $x\in E$ such that for any $\epsilon>0$ there is $p>0$ such that for a sufficiently large $k\in\bN$ we have
\begin{equation}\label{eq:ne6}
\textstyle\bP_x\left[\left|\frac{1}{k} \sum_{i=0}^{k-1}g_m(x_i)-\nu(g_m)\right|\geq \epsilon\right]\leq e^{-kp}. 
\end{equation}
Then, we have $\lambda_m<1$.  
\end{proposition}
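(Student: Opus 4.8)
The plan is to use the MPE iteration formula \eqref{eq:mp.at3.2} (specialized to $g_m$) to obtain an upper bound on $\lambda_m$ via the empirical average of $g_m$ along the chain started from a carefully chosen point, and then invoke the concentration bound \eqref{eq:ne6} to push that empirical average below $1$. Concretely, iterating the MPE equation for $g_m$ gives, for every $x\in E$ and $n\in\bN$,
\[
n\lambda_m=\mu_x\!\left(\sum_{i=0}^{n-1}g_m(x_i)+w_m(x_n)-w_m(x)\right).
\]
Since I am only allowed to assume $w_m$ is a \emph{generic} (possibly unbounded) solution, the term $w_m(x_n)-w_m(x)$ is the delicate one; but the plan is to take $x$ to be the specific point from the hypothesis of Proposition~\ref{pr:st12} and bound $\mu_x$ from above by splitting the expectation according to the event in \eqref{eq:ne6}.

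First I would fix $\epsilon>0$ small enough that $\nu(g_m)+\epsilon<1$ — this is possible precisely because of the standing assumption $\nu(g_m)<1$ — and let $p>0$ be the corresponding constant from \eqref{eq:ne6}. On the "good" event $G_k:=\{|\frac1k\sum_{i=0}^{k-1}g_m(x_i)-\nu(g_m)|<\epsilon\}$ we have $\sum_{i=0}^{k-1}g_m(x_i)\le k(\nu(g_m)+\epsilon)$, while on the complement we use only the crude bound $\sum_{i=0}^{k-1}g_m(x_i)\le k$ (recall $0\le g_m\le 1$). Writing the entropic utility as a logarithm of an expectation and splitting on $G_k$ versus $G_k^c$, I get
\[
\mu_x\!\left(\sum_{i=0}^{k-1}g_m(x_i)\right)\le \ln\!\left(e^{k(\nu(g_m)+\epsilon)}+e^{k}\,\bP_x[G_k^c]\right)\le \ln\!\left(e^{k(\nu(g_m)+\epsilon)}+e^{k(1-p)}\right),
\]
using \eqref{eq:ne6} in the last step. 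Both exponents, $\nu(g_m)+\epsilon$ and $1-p$, are strictly less than $1$, so this quantity is bounded above by $k\beta+\ln 2$ for $\beta:=\max\{\nu(g_m)+\epsilon,\,1-p\}<1$ and all large $k$.

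To finish I would handle the boundary term. I expect to combine the above with the MPE iteration evaluated at the distinguished point $x$: $k\lambda_m\le \mu_x(\sum_{i=0}^{k-1}g_m(x_i))+w_m(x_k)-w_m(x)$ is not quite legitimate because $\mu_x$ is not additive, so more carefully one uses monotonicity and translation-invariance of $\mu_x$ to write $k\lambda_m=\mu_x(\sum_{i=0}^{k-1}(g_m(x_i)-\lambda_m))+\text{(bounded-in-}k\text{ correction)}$ only when $w_m$ is bounded — which is not assumed here. The cleanest route, and the one I would pursue, is instead: from \eqref{eq:ne3}-style reasoning or directly from the definition, iterate to get $w_m(x)=\mu_x\big(\sum_{i=0}^{k-1}(g_m(x_i)-\lambda_m)+w_m(x_k)\big)$, then use monotonicity of $\mu_x$ together with the lower bound $w_m\ge \inf_E w_m =: c_m$ (if $w_m$ is bounded below) — but since $w_m$ may be unbounded, the actual argument must instead exploit that $g_m\le 1$ so $\mu_x(\sum_{i=0}^{k-1}(g_m(x_i)-\lambda_m)+w_m(x_k))\le \mu_x(\sum_{i=0}^{k-1}(g_m(x_i)-\lambda_m))+\|w_m\|$ fails; the honest fix is to compare $\lambda_m$ against $\frac1k\mu_x(\sum g_m(x_i))$ plus a term that is $o(k)$ by stationarity-type control on $w_m(x_k)$. \textbf{The main obstacle} is precisely this: controlling $\frac1k\mu_x(w_m(x_k))$ for an \emph{a priori unbounded} $w_m$, and I expect the argument either to additionally invoke Proposition~\ref{pr:st1}'s estimate (tying $w_m$ to $\lambda_m$) in a bootstrapping fashion, or to use that under a generic solution one may assume $w_m$ bounded on the relevant sublevel/recurrent region; once $w_m(x_k)$ contributes only $o(k)$, dividing by $k$ and letting $k\to\infty$ yields $\lambda_m\le \beta<1$, which is the claim.
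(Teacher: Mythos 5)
Your central computation is exactly the paper's proof: fix the point $x$ from the hypothesis, pick $\epsilon<1-\nu(g_m)$, split $\bE_x\left[e^{\sum_{i=0}^{k-1}g_m(x_i)}\right]$ on the event in \eqref{eq:ne6} using $0\leq g_m\leq 1$, which gives the bound $e^{k(1-p)}+e^{k(\nu(g_m)+\epsilon)}$; the paper then assumes (without loss of generality) $p<1-\nu(g_m)-\epsilon$ so that the bound becomes $2e^{k(1-p)}$, hence $\mu_x\left(\sum_{i=0}^{k-1}g_m(x_i)\right)\leq \ln 2+k(1-p)$ — your ``$k\beta+\ln 2$'' with $\beta=1-p$. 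The difference lies in the last step: what you flag as the main obstacle, namely controlling the boundary term $w_m(x_k)$ for a possibly unbounded $w_m$, is not addressed in the paper at all. The paper simply concludes ``which implies $\lambda_m\leq 1-p$'', i.e.\ it identifies $\lambda_m$ with $\lim_{k\to\infty}\tfrac{1}{k}\ln\bE_x\left[e^{\sum_{i=0}^{k-1}g_m(x_i)}\right]$ (the same identification is written out explicitly in the proposition that follows), so no bootstrapping through Proposition~\ref{pr:st1} and no stationarity-type control of $w_m(x_k)$ is attempted. Your instinct is correct that for a genuinely unbounded $w_m$ this implication does not follow from the MPE iteration alone without something extra — for instance, if $w_m$ is bounded from below one gets $k\lambda_m\leq \mu_x\left(\sum_{i=0}^{k-1}g_m(x_i)\right)+w_m(x)-\inf_{y\in E} w_m(y)$ and the conclusion follows by dividing by $k$ — but the paper takes the identification of $\lambda_m$ with the long-run scaled entropy as given rather than deriving it. In short, your proposal contains everything that is actually in the paper's argument; the residual gap you describe is one you inherit from the paper rather than a missing idea relative to it.
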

\begin{proof}
Let $x\in E$ be such that \eqref{eq:ne6} is satisfied and let $\epsilon>0$ be such that $\epsilon< 1-\nu(g_m)$. Then, there exists $p>0$ such that for sufficiently big $k\in\bN$, we have
\begin{align}
\bE_x\left[e^{\sum_{i=0}^{k-1}g_m(x_i)}\right] & \leq  e^k \bP_x\left[\left|\frac{1}{k} \sum_{i=0}^{k-1}g_m(x_i)-\nu(g_m)\right|\geq \epsilon\right] +e^{k(\nu(g_m)+\epsilon)} \nonumber \\
&\leq e^{k(1-p)}+e^{k(\nu(g_m)+\epsilon)}\label{eq:ne61}.
\end{align}
Without loss of generality, we can assume that $p<1-\nu(g_m)-\epsilon$; note that if \eqref{eq:ne61} holds for some $p>0$, than it also holds for any smaller positive $p$. Consequently, we get
\begin{equation}
\mu_x\left(\sum_{i=0}^{k-1}g_m(x_i)\right)=\ln \bE_x\left[e^{\sum_{i=0}^{k-1}g_m(x_i)}\right] \leq \ln (2e^{k(1-p)})=\ln 2 + k(1-p)   
\end{equation}
which implies $\lambda_m\leq 1-p$. 
\end{proof}
\begin{remark}[Estimates of the empirical measures for Feller Markov process]
Assumption \eqref{eq:ne6} follows from the estimates for empirical measures of Feller Markov processes. In the form we use here sufficient conditions can be found in Theorem 3 and its proof of  \cite{DuncanSt2000} and the theory comes back to famous papers \cite{DonskerVI} and \cite{DonskerVIII}.    
\end{remark}

In the next result we show that the mixing condition combined with \eqref{eq:ne4} leads to a compact-type dynamics for which we do not get condition~\eqref{eq:ne4b}.

\begin{proposition}
Fix $\bar x\in E$, $\eta>1$, and $m\in\bN$. Assume $g_m\in C(E)$ admits generic MPE solution $(w_m,\lambda_m)$ and $\bP$ satisfies \eqref{A.1'} and \eqref{eq:ne4}. Then, $\lambda_m=1$.
\end{proposition}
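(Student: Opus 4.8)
The plan is to pin down $\nu(g_m)$ first and then sandwich $\lambda_m$ between $1$ (from above, because $g_m\le 1$) and $\nu(g_m)=1$ (from below, because the chain spends essentially all its time where $g_m=1$).

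\emph{Step 1: $\nu(g_m)=1$.} Since $\bP$ satisfies \eqref{A.1'} with some horizon $n$ and some $\Lambda\in(0,1)$, the kernel $\bP_n$ satisfies \eqref{A.1} with $\Lambda$; iterating exactly as in the proof of Proposition~\ref{pr:assumption.relation}(2) and using invariance of $\nu$ gives $\sup_{x\in E}\|\bP_{kn}(x,\cdot)-\nu\|_{\mathrm{var}}\le\Lambda^{k}$ for every $k\in\bN$. Hence, if $\nu(B)>0$, then for $k$ large $\inf_{x\in E}\bP_{kn}(x,B)\ge\nu(B)-\Lambda^{k}>0$, and since $\{x_{kn}\in B\}\subseteq\{\tau_B\le kn\}$ this contradicts \eqref{eq:ne4} (take $N=kn$ there). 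So $\nu(B)=0$; as $g_m$ equals $1$ outside the open ball $\{y:\rho(y,\bar x)<\eta\}\subseteq B$, it follows that $g_m=1$ $\nu$-a.e., i.e.\ $\nu(g_m)=1$.

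\emph{Step 2: the two bounds.} Iterating \eqref{eq:3:bellman.av} for $g_m$ gives $n\lambda_m+w_m(x)=\mu_x\bigl(\sum_{i=0}^{n-1}g_m(x_i)+w_m(x_n)\bigr)$ for all $x\in E$, $n\in\bN$. Since $g_m\le 1$, the right-hand side is $\le n+\mu_x(w_m(x_n))$, so $\lambda_m\le 1+\tfrac1n\bigl(\mu_x(w_m(x_n))-w_m(x)\bigr)$. For the reverse bound, Jensen's inequality $\mu_x(\cdot)\ge\bE_x(\cdot)$ and total-variation convergence $\bP_i(x,\cdot)\to\nu$ (from Step~1, so $\bE_x[g_m(x_i)]\to\nu(g_m)=1$) give $n\lambda_m+w_m(x)\ge\sum_{i=0}^{n-1}\bE_x[g_m(x_i)]+\bE_x[w_m(x_n)]$, whence $\lambda_m\ge 1+\liminf_n\tfrac1n\bigl(\bE_x[w_m(x_n)]-w_m(x)\bigr)$. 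If the boundary terms $\tfrac1n\mu_x(w_m(x_n))$ and $\tfrac1n\bE_x[w_m(x_n)]$ vanish, then $\lambda_m=1$. A cleaner route avoids them altogether: exponentiating \eqref{eq:3:bellman.av} yields $\int e^{w_m}\,d\bP(x,\cdot)=e^{\lambda_m-g_m(x)}e^{w_m(x)}$, and integrating against $\nu$ (using $\nu\bP=\nu$ and Tonelli, $e^{w_m}\ge 0$) gives, with $g_m=1$ $\nu$-a.e.\ from Step~1, the identity $\int e^{w_m}\,d\nu=e^{\lambda_m-1}\int e^{w_m}\,d\nu$; since $e^{w_m}>0$ this forces $\lambda_m=1$, \emph{provided} $\int e^{w_m}\,d\nu<\infty$.

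\emph{Main obstacle.} Because $(w_m,\lambda_m)$ is only a generic (possibly unbounded) solution, neither $\|w_m\|$ nor $\int e^{w_m}\,d\nu$ is finite a priori, and this is where the real work lies. The relevant finiteness along the chain is free: from the iterated equation and $g_m\ge 0$ (which is built into well-posedness of \eqref{eq:3:bellman.av}) one gets $\bE_x[e^{w_m(x_n)}]\le e^{n\lambda_m+w_m(x)}<\infty$. The route I would pursue is to combine this with the minorization carried by the mixing hypothesis — under \eqref{A.2'} one directly obtains $w_m$ bounded below and $\int e^{w_m}\,d\eta<\infty$, and pushing this through the Doeblin splitting should give $\int e^{w_m}\,d\nu<\infty$, hence $\lambda_m=1$ by the identity above. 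Failing a clean version of that, one localizes Step~2 to a compact $C\subseteq E\setminus B$ with $\nu(C)>0$ (such $C$ exists by inner regularity since $\nu(E\setminus B)=1$): on $C$, $w_m$ is bounded by continuity, and the chain enters $C$ with uniformly positive probability in a bounded number of steps (again by the bound of Step~1), so $\sup_C|w_m|<\infty$ replaces $\|w_m\|$ in the boundary-term estimates and they vanish. Upgrading the heuristic "the boundary terms are $o(n)$" to a rigorous statement for unbounded $w_m$ is the delicate point.
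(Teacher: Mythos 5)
Your Step~1 is correct and coincides with the paper's first step: the paper also reduces everything to showing $\nu(B)=0$, citing Doob for the fact that under \eqref{A.1'} the quantities $\inf_{y\in E}\bP_n(y,B)$ increase to $\nu(B)$, whereas you obtain the same conclusion from the geometric bound $\sup_{x}\|\bP_{kn}(x,\cdot)-\nu\|_{\mathrm{var}}\le\Lambda^{k}$ together with \eqref{eq:ne4}; both arguments are sound, and yours is in fact more self-contained.

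The genuine gap is that your Step~2 never closes. The paper finishes by writing $\lambda_m=\lim_{n\to\infty}\tfrac1n\ln\bE_x\bigl[e^{\sum_{i=0}^{n-1}g_m(x_i)}\bigr]$ and then applying Jensen, $\lambda_m\ge\liminf_{n}\tfrac1n\bE_x\bigl[\sum_{i=0}^{n-1}\1_{B^c}(x_i)\bigr]=\nu(B^c)=1$, together with $\lambda_m\le 1$ from $g_m\le 1$. Your sandwich is the same chain of inequalities, but you keep the boundary terms $\tfrac1n\mu_x(w_m(x_n))$ and $\tfrac1n\bE_x[w_m(x_n)]$ explicit and state, correctly, that you cannot control them for an unbounded $w_m$; your alternative route via the identity $\int e^{w_m}\,d\nu=e^{\lambda_m-1}\int e^{w_m}\,d\nu$ likewise requires $\int e^{w_m}\,d\nu<\infty$, which you do not establish. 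So the argument stops exactly where the conclusion must be drawn, and the proposed repairs are sketches: the Doeblin-splitting idea is not carried out, and localizing to a compact $C$ with $\nu(C)>0$ only bounds $w_m$ on $C$, which does not control $w_m(x_n)$ on the event $\{x_n\notin C\}$. The paper sidesteps this by invoking the identification of $\lambda_m$ with the long-run risk-sensitive average, i.e.\ the representation behind Proposition~\ref{pr:lambda.is.good} (proved there for bounded solutions), after which Jensen plus $\nu(B^c)=1$ finishes the proof in two lines; you have in fact put your finger on a point the paper treats lightly for generic solutions, but to complete your proof you must either invoke that representation of $\lambda_m$ or prove a version of it valid for the unbounded case.
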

\begin{proof}
Fix $\bar x\in E$, $\eta>1$, and $m\in\bN$ such that $g_m$ admits MPE solution $(w_m,\lambda_m)$.
From  \eqref{A.1'}, it follows that the value $\nu(B)$, where $\nu$ is the unique invariant measure, is approximated from below by an increasing sequence $m_n(B):=\inf_{y\in E} \bP_n(y,B)$, for details see the proof of the case (b) in Section 5, Chapter 5 of~\cite{Doob}.

Now, since for any $x\in E$ we have $m_n(B)\leq \bP_n(x,B)$ and $\bP_n(x,B) \to 0$ as $n \to \infty$ due to \eqref{eq:ne4}, we get $\nu(B)=0$. Consequently, using Jensen inequality, we get
\begin{align}
\lambda_m &\textstyle =\lim_{n\to \infty} \tfrac{1}{n}\ln \bE_x\left[e^{\sum_{i=0}^{n-1} g_m(x_i)}\right] \nonumber \\
&\textstyle \geq \liminf_{n\to \infty}  \tfrac{1}{n}\ln \bE_x\left[e^{\sum_{i=0}^{n-1} \1_{B^c}(x_i)}\right] \nonumber \\
&\textstyle \geq \liminf_{n\to \infty} \tfrac{1}{n} \bE_x\left[\sum_{i=0}^{n-1} \1_{B^c}(x_i)\right]\nonumber\\
& =\nu(B^c)=1\nonumber,
\end{align}
which concludes the proof as $1\geq \lambda_m$ due to the fact that $1\geq g_m(\cdot)$. 
\end{proof}

\begin{proposition}\label{lem:lem8}
Fix $\bar x\in E$, $\eta>1$, and $m\in\bN$. Assume $g_m\in C(E)$ admits bounded MPE solution $(w_m,\lambda_m)$ and $\bP$ has unique invariant measure $\nu$. If $\lambda_m=1$, then $\nu(B(\bar{x}, \eta-\frac{1}{m}))=0$. 
\end{proposition}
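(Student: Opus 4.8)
The plan is as follows. Write $A:=B(\bar x,\eta-\tfrac1m)$, the inner ball on which $g_m$ vanishes; by construction $0\le g_m\le 1$ and $g_m\equiv 0$ on $A$, so that $1-g_m\ge \1_A$ on all of $E$ (with equality on $A$). The claim to prove is $\nu(A)=0$. My strategy is to show that, when $\lambda_m=1$, the Feynman--Kac type quantity $\bE_x\!\big[\exp(-\sum_{i=0}^{n-1}\1_A(x_i))\big]$ stays bounded below by a strictly positive constant, uniformly in $x\in E$ and $n\in\bN$, and then to contradict this via the ergodic behaviour of the invariant measure unless $\nu(A)=0$.

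First I would derive the uniform lower bound. Iterating the MPE exactly as in the proof of Proposition~\ref{pr:lambda.is.good}, and substituting $\lambda_m=1$, for every $x\in E$ and $n\in\bN$ one has
\[
w_m(x)=\mu_x\!\Big(\textstyle\sum_{i=0}^{n-1}\big(g_m(x_i)-1\big)+w_m(x_n)\Big).
\]
Bounding $w_m(x_n)\le\|w_m\|$ and using monotonicity and translation invariance of the entropic utility, then $w_m(x)\ge-\|w_m\|$, and finally $1-g_m\ge\1_A$, gives
\[
-2\|w_m\|\le \ln\bE_x\!\Big[\exp\Big(-\textstyle\sum_{i=0}^{n-1}\big(1-g_m(x_i)\big)\Big)\Big]\le \ln\bE_x\!\Big[\exp\Big(-\textstyle\sum_{i=0}^{n-1}\1_A(x_i)\Big)\Big].
\]
Hence $\bE_x\!\big[\exp(-\sum_{i=0}^{n-1}\1_A(x_i))\big]\ge e^{-2\|w_m\|}$ for every $x\in E$ and $n\in\bN$; boundedness of $w_m$ enters here in an essential way.

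Finally I would integrate this estimate against $\nu$, obtaining $\bE_\nu\!\big[\exp(-\sum_{i=0}^{n-1}\1_A(x_i))\big]\ge e^{-2\|w_m\|}>0$ for every $n\in\bN$. On the other hand, being the unique invariant probability measure, $\nu$ is an extreme point of the convex set of invariant probability measures and is therefore ergodic; Birkhoff's ergodic theorem then yields $\tfrac1n\sum_{i=0}^{n-1}\1_A(x_i)\to\nu(A)$, $\bP_\nu$-a.s. If $\nu(A)>0$ this forces $\sum_{i=0}^{n-1}\1_A(x_i)\to\infty$ and hence $\exp(-\sum_{i=0}^{n-1}\1_A(x_i))\to0$ $\bP_\nu$-a.s.; since these variables are bounded by $1$ (indeed non-increasing in $n$), dominated convergence gives $\bE_\nu\!\big[\exp(-\sum_{i=0}^{n-1}\1_A(x_i))\big]\to0$, contradicting the uniform lower bound. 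Therefore $\nu(A)=\nu(B(\bar x,\eta-\tfrac1m))=0$.

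The only genuinely delicate point is the passage through the invariant measure: one uses that the unique invariant probability measure is ergodic, so that the time averages of $\1_A$ converge $\bP_\nu$-a.s.\ to $\nu(A)$, followed by a soft dominated-convergence argument. If one prefers to avoid the ergodic theorem, the same contradiction is reachable by a visit-counting argument: fixing $k$ with $e^{-k}<\tfrac12 e^{-2\|w_m\|}$, the lower bound gives $\bP_x[\,\#\{i<n:x_i\in A\}<k\,]\ge\tfrac12 e^{-2\|w_m\|}$ for all $n$, so (letting $n\to\infty$ and integrating against $\nu$) $\bP_\nu[\,x_i\in A \text{ for only finitely many } i\,]>0$, which is incompatible with $\nu(A)>0$ for an invariant $\nu$.
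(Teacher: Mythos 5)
Your proof is correct and follows essentially the same route as the paper: iterate the MPE with $\lambda_m=1$, dominate $g_m-1\le -\1_{B(\bar x,\eta-\frac1m)}$, and deduce the uniform-in-$n$ lower bound $\bE_x\bigl[e^{-N(B(\bar x,\eta-\frac1m),n)}\bigr]\ge e^{-2\|w_m\|}$, which is incompatible with $\nu\bigl(B(\bar x,\eta-\frac1m)\bigr)>0$. The only difference is that you spell out the final incompatibility (ergodicity of the unique invariant measure, Birkhoff, dominated convergence, or the visit-counting variant), a step the paper asserts without detail.
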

\begin{proof}
Iterating MPE equation with $\lambda_m=1$, for any $x\in E$ and $n\in\bN$, we have 
\begin{equation}
 w(x)=\mu_x\left(\sum_{i=0}^{n-1}(g_m(x_i)-1)+w(x_n)\right).
\end{equation}
Consequently, since $g_m(\cdot)-1\leq \1_{B(\bar{x}, \eta-\frac{1}{m})^c}(\cdot)-1=-\1_{B(\bar{x}, \eta-\frac{1}{m})}(\cdot)$, for any $n\in\bN$, we have
\begin{equation}
w(x)-\|w\|\leq \mu_x\left(-N(B(\bar{x}, \eta-\tfrac{1}{m}),n)\right),   
\end{equation}
where $N(A,n)$ denotes the number of visits in set $A\in\cE$ in the time interval $[0,n]$. Letting $n\to \infty $, we get $\mu_x\left(-N(B(\bar{x}, \eta-\frac{1}{m}))\right) >-\infty $, where $N(A)$ denotes the number of visits in $A$ over the whole time interval. In particular, this implies that for any $x\in E$ we get
\[
\bP_x\left[N(B(\bar{x}, \eta-\tfrac{1}{m})) < \infty\right]>0.
\]
This property cannot be satisfied if $\nu(B(\bar{x}, \eta-\frac{1}{m}))>0$, which concludes the proof. 
\end{proof}

\begin{corollary}
Under assumptions of Proposition~\ref{lem:lem8}, if $\nu(B(\bar{x}, \eta-\tfrac{1}{m}))>0$, then $\lambda_m<1$, and for sufficiently large $n\in\bN$ we must have 
\[
\sup_{x\in E} \bP_x\left[\tau_B>n\right]<1\quad\textrm{or}\quad \inf_{x\in E}   \bP_x\left[\tau_N\leq n \right]>0.
\]
\end{corollary}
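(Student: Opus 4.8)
The plan is to obtain the two conclusions as a contrapositive of Proposition~\ref{lem:lem8} followed by a direct substitution into estimate \eqref{eq:ne2'} of Proposition~\ref{pr:st1}; note that both cited results apply because the hypotheses of Proposition~\ref{lem:lem8} include that $g_m$ admits a \emph{bounded} MPE solution $(w_m,\lambda_m)$.

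First I would record the a priori bound $\lambda_m\leq 1$. Since $g_m(\cdot)\leq 1$, iterating the MPE equation \eqref{eq:3:bellman.av} for $g_m$ and using monotonicity and translation invariance of the entropic utility gives, for every $x\in E$ and $n\in\bN$,
\[
w_m(x)=\mu_x\!\left(\textstyle\sum_{i=0}^{n-1}(g_m(x_i)-\lambda_m)+w_m(x_n)\right)\leq n(1-\lambda_m)+\|w_m\|,
\]
so $n(1-\lambda_m)\geq w_m(x)-\|w_m\|\geq -2\|w_m\|$ for all $n\in\bN$; boundedness of $w_m$ then forces $\lambda_m\leq 1$. Proposition~\ref{lem:lem8} states that $\lambda_m=1$ would entail $\nu(B(\bar x,\eta-\tfrac{1}{m}))=0$, which contradicts the standing hypothesis $\nu(B(\bar x,\eta-\tfrac{1}{m}))>0$. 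Hence $\lambda_m\neq 1$, and combined with $\lambda_m\leq 1$ this yields $\lambda_m<1$, which is the first assertion.

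With $1-\lambda_m>0$ secured, I would plug it into \eqref{eq:ne2'}, which reads $\sup_{x\in E}\bP_x[\tau_B>n]\leq e^{2\|w_m\|+1}e^{-(1-\lambda_m)n}$. Since $w_m$ is bounded and $1-\lambda_m>0$, the right-hand side tends to $0$ as $n\to\infty$, so it is strictly less than $1$ for all sufficiently large $n$; taking complements gives $\inf_{x\in E}\bP_x[\tau_B\leq n]=1-\sup_{x\in E}\bP_x[\tau_B>n]>0$ for the same $n$, so in fact both displayed alternatives hold simultaneously (reading $\tau_N$ as $\tau_B$). The argument involves no genuine difficulty: the only step that needs a moment of care is extracting $\lambda_m\leq 1$ from boundedness of $w_m$ rather than assuming it, after which everything is an immediate consequence of the two cited propositions.
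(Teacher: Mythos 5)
Your proposal is correct and follows essentially the same route as the paper: the contrapositive of Proposition~\ref{lem:lem8} gives $\lambda_m<1$, and then the bound \eqref{eq:ne2'} from Proposition~\ref{pr:st1} yields the hitting-time statement for large $n$ (with $\tau_N$ read as $\tau_B$). The only addition is your explicit derivation of $\lambda_m\leq 1$ from $g_m\leq 1$ and boundedness of $w_m$, which the paper treats as immediate (it is noted in the preceding proposition's proof), so there is no substantive difference.
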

\begin{proof}
Property $\nu(B(\bar{x}, \eta-\frac{1}{m}))>0$ combined with Proposition \ref{lem:lem8} implies $\lambda_m<1$ and it remains to use \eqref{eq:ne2'}.
\end{proof}

Let us now provide a summary of the technical results presented in this section that apply to the general locally compact separable metric space $E$. The study of local bounded MPE solution existence for reward functions that approximate (scaled) set indicator functions proved to be a useful analytical tool. In particular, we conclude that if the underlying Feller Markov process has a unique invariant measure, then it is quite natural for the MPE solution to be unbounded as we expect $\lambda_m=1$ for some non-transient set and sufficiently big $m\in\bN$. Indeed, if the MPE solution is bounded and we have $\lambda_m=1$, then $\nu(B(\bar{x},\eta-{1\over m}))=0$ which means that the corresponding ball must be transient -- this points out to relatively compact-like process propagation.

\section{Negative examples}

Of course, fast process entry into invariant measure support is not sufficient to guarantee existence of MPE under \eqref{A.1}. In particular, even if the invariant measure has full support, the solution to MPE might still not exist. In this section we show a series of negative examples that state conditions under which MPE will not always exist. We hope that the proof techniques introduced in section could be used in the future to help formulate natural necessary and sufficient conditions for MPE existence on general spaces.

\begin{example}[No bounded MPE solution under full support invariant measure]\label{ex:denumerable}
Let $E=\bN$ be a denumerable state space and let us consider a Markov chain with transition matrix
\[
P:=\begin{bmatrix}
\tfrac{1}{2}  & \tfrac{1}{2}& 0 &0 &0 & \ldots\\
\tfrac{1}{2}& 0  &\tfrac{1}{2} &0 &0 & \ldots\\
\tfrac{1}{2}& 0  &0 &\tfrac{1}{2} &0& \ldots\\
\tfrac{1}{2}& 0  &0 &0 &\tfrac{1}{2}& \ldots\\
\ldots & 0&0 &0 &0 & \ldots
\end{bmatrix}.
\]
Then, there exists $g\in C(E)$ for which bounded MPE solution does not exist.
\end{example}
\begin{proof}
Note that \eqref{A.1} is satisfied with $\Lambda=\frac{1}{2}$.  Let $\epsilon>0$ and let $g\in C(E)$ be given by
\[
g(i):=
\begin{cases}
0,&\textrm{if } i\in\bigcup_{k\in\bN}[2^{k},2^{k}+2^{k-1})\\
2(\ln 2 + \epsilon),&\textrm{if } i\in\bigcup_{k\in\bN}[2^{k}+2^{k-1},2^{k+1})
\end{cases}.
\]
Let us assume that bounded MPE solution, say $\lambda\in\bR$ and $w\in C(E)$, exists; without loss of generality we assume that $w(1)=0$. Let $(x_t)$ denote the Markov process starting at $x_0=1$. Noting that for any $n\in\bN$ we get
\[
\sum_{t=0}^{n}g(x_t) \leq \frac{n\cdot 2(\ln 2+\epsilon)}{2},
\]
we conclude that $\lambda \leq \ln 2 +\epsilon$. Moreover, recalling MPE equation \eqref{eq:3:bellman.av}, we know that for any $i\in\bN$ we get
\begin{align*}
w(i)-w(i+1) &=g(i)-\lambda +\ln\left[\tfrac{1}{2}e^{0}+\tfrac{1}{2}e^{w(i+1)}\right]-w(i+1)\\
&\geq g(i)-(\ln 2+\epsilon)-\ln 2 +\ln\left[1+e^{w(i+1)}\right]-w(i+1)\\
& \geq g(i)-(\epsilon+2\ln 2).
\end{align*}
In particular, for any $k\in\bN$, taking the sum, we get
\begin{align*}
w(2^{k}+2^{k-1})-w(2^{k+1}) &\geq \left[\sum_{i=2^{k}+2^{k-1}}^{2^{k+1}-1}g(i)\right]-(2^{k+1}-2^{k}-2^{k-1})(\epsilon+2\ln 2)\\
& = (2^{k+1}-2^{k}-2^{k-1})(2(\ln2+\epsilon)-\epsilon-2\ln 2)\\
&\geq 2^{k-1}\epsilon.
\end{align*}
This contradicts the assumption that $w\in C(E)$. In fact, one could also show non-existence by investigating iterated values of $\lambda$. Indeed, for any $n\in\bN$ we get
\begin{align*}
\lambda &\leq\tfrac{1}{n}\mu_{1}\left(\sum_{i=0}^{n-1}g_k(x_i)\right)+\tfrac{1}{n}\|w\|_{\textrm{sp}}\\
&\leq \epsilon+\ln 2 +\tfrac{1}{n}\|w\|_{\textrm{sp}},\\
\lambda &\geq\tfrac{1}{n}\mu_{2^{n}+2^{n-1}}\left(\sum_{i=0}^{n-1}g_k(x_i)\right)-\tfrac{1}{n}\|w\|_{\textrm{sp}}\\
&\geq \tfrac{1}{n}\ln\left[\tfrac{1}{2^{n-1}}e^{n2(\ln 2 +\epsilon)}\right] -\tfrac{1}{n}\|w\|_{\textrm{sp}}\\
&\geq \tfrac{1}{n}(-(n-1)\ln2+n2(\ln 2 +\epsilon))-\tfrac{1}{n}\|w\|_{\textrm{sp}}\\
& =2\epsilon+\tfrac{n+1}{n}\ln 2 -\tfrac{1}{n}\|w\|_{\textrm{sp}}.
\end{align*}
Taking the limit, this leads to contradiction, as we get $2\epsilon \leq \lambda-\ln2 \leq \epsilon$.
\end{proof}

Now, let us build upon Example~\ref{ex:denumerable} and present a modification of this example in which we permit one step state recurrence. In the next example, we show how to prove non-existence of MPE bounded solution by introducing two reward functions and use stochastic dominance to analyse relation between them.

\begin{example}[No bounded MPE solution under full support invariant measure and one step recurrent states]\label{ex:denumerable2}
Let $E=\bN_{+}$ be a denumerable state space and let us consider a Markov chain with transition matrix
\[
P:=\begin{bmatrix}
\tfrac{3}{4}  & \tfrac{1}{4}& 0 &0 &0 & \ldots\\
\tfrac{1}{2}& \tfrac{1}{4}  &\tfrac{1}{4} &0 &0 & \ldots\\
\tfrac{1}{2}& 0  &\tfrac{1}{4} &\tfrac{1}{4} &0& \ldots\\
\tfrac{1}{2}& 0  &0 &\tfrac{1}{4} &\tfrac{1}{4}& \ldots\\
\ldots & \ldots& \ldots &\ldots &\ldots & \ldots
\end{bmatrix}.
\]
Then, there exists $g\in C(E)$ for which bounded MPE solution does not exist.
\end{example}

\begin{proof}
Note that \eqref{A.1} is satisfied with $\Lambda=\frac{1}{2}$. Let us assume the MPE solution for $P$ exists for any $g\in C(E)$. For a fixed $k\in\bR_{+}$, we define two functions $g_1,g_2\in B(E)$ by setting
\[
g_1(i):=
\begin{cases}
0,&\textrm{if } i\in 2\bN+1\\
k, &\textrm{if } i\in 2\bN,
\end{cases},
\quad g_2(i):=
\begin{cases}
0,&\textrm{if } i\in\bigcup_{n\in\bN}[2^{n},2^{n}+2^{n-1})\\
k,&\textrm{if } i\in\bigcup_{n\in\bN}[2^{n}+2^{n-1},2^{n+1})
\end{cases}.
\]
Let $\lambda_1,\lambda_2\in\bR$ and $w_1,w_2\in C(E)$ be solutions to MPE equations for $g_1$ and $g_2$, respectively. Without loss of generality, let us assume that $w_1(1)=w_2(1)=0$. Let $S^1_n:=\sum_{t=1}^{n-1}g_1(x_t)$ and $S^2_n:=\sum_{t=1}^{n-1}g_2(x_t)$, where $(x_t)$ denotes the underlying Markov process with starting point $x_0\in E$. Recalling that, for $x_0=1$, we get
\[
\lambda_1=\lim_{n\to\infty}\tfrac{1}{n}\mu_1\left(S^1_n\right)\quad\textrm{and}\quad \lambda_2=\lim_{n\to\infty}\tfrac{1}{n}\mu_1\left(S^2_n\right),
\]
and noting that for $n\in\bN$ we get $F_{S_n^1}(\cdot)\geq F_{S_n^2}(\cdot)$, i.e. $S_n^1$ stochastically dominates $S_n^2$, for $x_0=1$, we conclude that
\begin{equation}\label{eq:g2.lambda.com}
\lambda_1\geq \lambda_2.
\end{equation}
Moreover, by analysing the structure of matrix $P$, it is easy to check that for $i\in\bN\setminus \{1\}$ we get $w_1(i)=w_1(i+2)$. This allows us to explicitly calculate the value of $\lambda_1$. Indeed, for any even $i>1$, from MPE equation \eqref{eq:3:bellman.av}, we get
\begin{align*}
w_1(i) & =g_1(i)-\lambda_1+\ln\left(\frac{1}{2}e^0 +\frac{1}{4}e^{w_1(i)}+\frac{1}{4}e^{w_1(i+1)}\right),\\
& =0-\lambda_1+\ln\left(\frac{1}{2}e^0 +\frac{1}{4}e^{w_1(i)}+\frac{1}{4}e^{w_1(i+1)}\right)\\
w_1(i+1) & =g_1(i+1)-\lambda_1+\ln\left(\frac{1}{2}e^0 +\frac{1}{4}e^{w_1(i+1)}+\frac{1}{4}e^{w_1(i+2)}\right)\\
& =k-\lambda_1+\ln\left(\frac{1}{2}e^0 +\frac{1}{4}e^{w_1(i+1)}+\frac{1}{4}e^{w_1(i)}\right),
\end{align*}
which implies $w_1(i)-w_1(i+1)=-k$. Consequently, for $i=2$, we get
\begin{align*}
w_1(2) & =-\lambda_1+\ln\left(\frac{1}{2}e^0 +\frac{1}{4}e^{w_1(2)}+\frac{1}{4}e^{w_1(2)+k}\right),
\end{align*}
which implies $\lambda_1=\ln\left(\frac{1}{2}e^{-w_1(2)} +\frac{1}{4}e^{0}+\frac{1}{4}e^{k}\right)$.
On the other hand, by MPE equation \eqref{eq:3:bellman.av} applied to $i=1$, we also know that $\lambda_1=\ln\left(\frac{3}{4}e^0 +\frac{1}{4}e^{w_1(2)}\right)$, which implies
\[
\tfrac{3}{4}e^0 +\tfrac{1}{4}e^{w_1(2)}=\tfrac{1}{2}e^{-w_1(2)} +\tfrac{1}{4}e^{0}+\tfrac{1}{4}e^{k}
\]
Thus, simple algebraic calculations yield
\[
e^{w_1(2)}=\tfrac{1}{2}\left(\sqrt{12+e^{2k}-4e^{k}}+e^k-2\right)
\]
and consequently we get
\begin{align}
\lambda_1& =\ln\left(\frac{1}{2} +\frac{1}{8}\left(\sqrt{12+e^{2k}-4e^{k}}+e^k\right)\right).\label{eq:g1.lambda1}
\end{align}
Now, let us show that solution for $g_2$ does not exists. From \eqref{eq:g2.lambda.com} and \eqref{eq:g1.lambda1} we know that for sufficiently big $k>0$ (used to define $g_1$ and $g_2$), and the corresponding MPE solutions, we get
\begin{equation}\label{eq:g2.1}
\lambda_2+\ln 2-k\leq \lambda_1+\ln 2-k < \ln\left(e^{-k} +\frac{1}{4}\left(e^{-k}\sqrt{12+e^{2k}}+1\right)\right)<0.
\end{equation}
Recalling that we set $w_2(1)=0$ and using MPE equation for $i\geq 1$, we get
\begin{align*}
w_2(i) & =g_2(i)-\lambda_2+\ln\left(\frac{1}{2}e^0 +\frac{1}{4}e^{w_2(i)}+\frac{1}{4}e^{w_2(i+1)}\right)\\
& \geq g_2(i)-\lambda_2+\ln\left(\frac{1}{4}e^{w_2(i)}+\frac{1}{4}e^{w_2(i+1)}\right)\\
& \geq g_2(i)-\lambda_2-\ln 2 +\ln\left(\frac{1}{2}e^{w_2(i)}+\frac{1}{2}e^{w_2(i+1)}\right)\\
& \geq g_2(i)-\lambda_2-\ln 2 +\frac{1}{2}w_2(i)+\frac{1}{2}w_2(i+1)
\end{align*}
which implies
\begin{equation}\label{eq:g2.2}
w_2(i)-w_2(i+1)\geq 2(g_2(i)-\lambda_2-\ln 2)\quad \textrm{for } i\geq 1.
\end{equation}
In particular, recalling definition of $g_2$ and using \eqref{eq:g2.2}, for any $n\in\bN$ we get
\[
w_2(2^{n+1})-w_2(2^{n}+2^{n-1}) \geq 2^{n-1}\cdot 2(k-\lambda_2-\ln 2).
\]
Now, recalling the fact that $k-\lambda_2-\ln2 >0$ due to \eqref{eq:g2.1} and setting $n\to\infty$, we get that $\|w_2\|_{\textrm{sp}}=\infty$, which contradicts the assumption that MPE solution for $g_2$ is bounded.
\end{proof}

In the last two example, we assumed that there are states which cannot be connected (with positive probability) in a finite number of steps and that there is a single path that goes to infinity and we can stay on it with geometric probability. In the next example, we show that those conditions could be relaxed, i.e. we can allow two-step connection (with positive probability) and split the geometric propagation into different paths.

\setcounter{MaxMatrixCols}{20}
\begin{example}[No bounded MPE solution under full support invariant measure with disjoint geometric propagation]
Let $E=\bN_{+}$ be a denumerable state space and let us consider a Markov chain with transition matrix
\[
P:=\begin{bmatrix}
1/2+p &\frac{e^{-2}}{2} &\frac{e^{-2}}{2} & \frac{e^{-4}}{3} & \frac{e^{-4}}{3} & \frac{e^{-4}}{3}& \frac{e^{-8}}{4}&  \frac{e^{-8}}{4}& \frac{e^{-8}}{4} & \frac{e^{-8}}{4} & \frac{e^{-16}}{5} & \frac{e^{-16}}{5} &\ldots\\
1/2& 0  &1/2    &0      &0       &0        & 0& 0& 0 & 0 & 0 & 0 &\ldots\\
1& 0    &0      &0      &0       &0        & 0& 0& 0 & 0 & 0 & 0 &\ldots\\
1/2& 0  &0      &0      &1/2     &0        & 0& 0& 0 & 0 & 0 & 0 &\ldots\\
1/2& 0  &0      &0      &0       &1/2      & 0& 0& 0 & 0 & 0 & 0 &\ldots\\
1  & 0  &0      &0      &0       &0        & 0& 0& 0 & 0 & 0 & 0 &\ldots\\
1/2& 0  &0      &0      &0       &0        & 0& 1/2& 0 & 0 & 0 & 0 &\ldots\\
1/2& 0  &0      &0      &0       &0        & 0& 0& 1/2 & 0 & 0 & 0 &\ldots\\
1/2& 0  &0      &0      &0       &0        & 0& 0& 0 & 1/2 & 0 & 0 &\ldots\\
1& 0  &0      &0      &0       &0        & 0& 0& 0 & 0 & 0 & 0 &\ldots\\
1/2  & 0  &0      &0      &0       &0        & 0& 0& 0 & 0 & 0 & 1/2 &\ldots\\
1/2  & 0  &0      &0      &0       &0        & 0& 0& 0 & 0 & 0 & 0 &\ldots\\
\ldots  & \ldots  &\ldots      &\ldots      &\ldots       &\ldots        & \ldots& \ldots& \ldots & \ldots & \ldots & \ldots &\ldots
\end{bmatrix}.
\]
where $p:=\frac{1}{2}-\sum_{i=1}^{\infty}e^{-2^i}$. Then, there exists $g\in C(E)$ for which bounded MPE solution does not exist.
\end{example}

\begin{proof}
let us split the state space into disjoint sets $A_0=\{1\}$, $A_1:=\{2,3\}$, $A_2:=\{4,5,6\}$, $A_3:=\{7,8,9,10\}$, etc. Using convention $A_i:=\{a^i_1,\ldots,a^i_{i+1}\}$, $i\in\bN$, let the transition kernel be given by by the following transition probabilities
\[
\bP(a_j^i,1)=
\begin{cases}
\frac{1}{2}, & i\in\bN_{+},\, j=1,\ldots,i\\
1, & i\in\bN_{+},\, j=i+1
\end{cases},\quad
\bP(a^i_j,a^i_{j}+1)=
\begin{cases}
\frac{1}{2} & i\in\bN_{+},\, j=1,\ldots,i\\
0 & i\in\bN_{+},\, j=i+1
\end{cases},
\]
\[
\bP(1,a^i_j)=
\begin{cases}
\frac{1}{2}+p & i=0, j=1\\
\frac{1}{i+1}e^{-2^{i}} & i\neq 0,\, j=1,2,\ldots,i+1
\end{cases},
\]
Note that the stated values fully allocate non-zero probability transitions. Next, let $g\in C(E)$ be given by

\[
g(a^i_j):=k(1-\1_{\{j=i+1\}}),\quad i\in\bN,\, j=1,\ldots,i+1
\]
Note that $g$ could be linked to the reward value vector given by 
\[
(0,k,0,k,k,0,k,k,k,0,k,k,k,k,0,\ldots).
\]
For simplicity, we also use $p_i$ to denote the probability of entering set $A_i$ from state $1$, i.e. we set
\[
p_i:=\bP_{1}[A_i] =\sum_{k=1}^{i+1}e^{-2^i}/(i+1)=e^{-2^i}.
\]
Let us assume that a bounded solution to MPE equation for $g$ exists and reach a contradicition; as usual, we use $\lambda\in\bR$ and $w\in C(E)$ to refer to the solution. First, let us show that
\begin{equation}\label{eq:lambda.lower1}
\lambda\geq  k+\ln\tfrac{1}{2}.
\end{equation}
For any $n\in\bN$, noting that $g$ is non-negative, we have
\[
n\lambda\geq \mu_{a^n_1}\left(\sum_{i=0}^{n-1}g(x_i)\right)-\|w\|\geq \ln\left[\frac{1}{2^n}e^{nk}\right] -\|w\| =nk +n\ln\tfrac{1}{2}-\|w\|.
\]
Dividing both sides by $n$ and letting $n\to\infty$ we get \eqref{eq:lambda.lower1}.
Second, to contradict bounded MPE existence, we want to show that there exists $k\in\bR_{+}$ such that
\begin{equation}\label{eq:lambda.upper1}
\lambda<k+\ln\tfrac{1}{2}.
\end{equation}
For simplicity, we set $k=2$. To show \eqref{eq:lambda.upper1}, let us first define a sequence of (discrete) i.i.d. random variables $(Z_i)_{i\in\bN}$ with probability mass function characterised by
\[
\textstyle \bP[Z_1=0] =1-\sum_{i=1}^{\infty} p_i,\quad
\bP[Z_1=ik] =p_i, \quad i\in\bN.
\]
Then, for starting point $x_0=1$ and any $n\in\bN$ one could show that
\begin{equation}\label{eq:weird1}
\textstyle \sum_{i=0}^{n}g(x_i) \preceq \sum_{i=1}^{n}Z_i,
\end{equation}
where $\preceq$ denotes the first order stochastic dominance. For brevity, we only sketch the idea of the proof of \eqref{eq:weird1}. First, to show \eqref{eq:weird1} we need to look at state $x_0=1$ as the 'reset state'. The dynamics of $V_k=\sum_{i=0}^{k}g(x_i)$, for $k=1,\ldots,n$, is as follows: (1) the process starts in $x_0=1$ with $V_0=0$; (2) in the first time step, the process remains at state $1$ (no increase) or enters subset $A_i$; (3) if the process is in $A_i$ then the process re-enters state $1$ in the (random) number of steps bounded from above  by $i$ -- the total aggregated reward before re-entering state $1$ is bounded by $ik$; (4) when the process re-enters state $1$ the dynamics is reset. Now, since $g$ is non-negative, the value of $V_k$ could be bounded from above. Second, we note that the sequence $(Z_i)$ could be linked to immediate gratification sequence. The construction is as follows: (1) we assume that if process enters $A_i$ (from state $1$) then we are paid an immediate full gratification $ik$ in one step and reset the state instantly; (2) this is better compared to the previous case since we assume we are paid immediately the maximal possible reward in a single step and we profit both from getting maximal reward with probability one and from increasing the number of time the process 'resets' itself after getting the reward; (3) this procedure leads to the sum of i.i.d. random variables $Z_i$ as presented in \eqref{eq:weird1}.

To show why this helps to prove \eqref{eq:lambda.upper1} let us note that the entropy of $Z_1$ could be bounded from above by $2+\ln\tfrac{1}{2}$. Namely, we get
\begin{equation}\label{eq:weuiord2}
\mu(Z_1)\leq \ln\left[ 1 e^0+ \sum_{i=1}^{\infty}e^{-2^i}e^{ik} \right]\leq \ln\left[ 1+ \sum_{i=1}^{\infty}e^{ik-2^i} \right] \leq \ln (3.15) < 2+\ln\tfrac{1}{2}.
\end{equation}
Now, noting that entropic utility is monotone, law-invariant, additive with respect to independent random variables, using \eqref{eq:weird1} and \eqref{eq:weuiord2}, and fixing starting point $x_0=1$, for $n\in\bN$, we get 
\[
\frac{1}{n}\mu_{x_1}\left(\sum_{i=0}^{n-1}g(x_i)\right)\leq \frac{1}{n}\mu\left(\sum_{i=1}^{n}Z_i\right)=\frac{n}{n}\mu(Z_1) < 2+\ln\tfrac{1}{2}.
\]
Thus, letting $n\to\infty$, we get 
\[
\lambda\leq \lim_{n\to\infty}\left[ \frac{1}{n}\mu_{x_1}\left(\sum_{i=0}^{n-1}g(x_i)\right)+\frac{\|w\|}{n}\right] < 2+\ln\tfrac{1}{2},
\]
which proves \eqref{eq:lambda.upper1} and leads to contradiction with bounded MPE existence.
\end{proof}

Finally, let us show, that local geometric propagation, i.e. positive lower bound imposed on diagonal values in the transition matrix, is sufficient to guarantee MPE non-existance for some $g\in C(E)$.

\begin{example}[No bounded MPE solution under full support invariant measure with local geometric propagation]
Let $E=\bN_{+}$ be a denumerable state space and let us consider a Markov chain with transition matrix
\[
P:=\begin{bmatrix}
\tfrac{1}{2}  & a_1& a_2 &a_3 &a_4 & \ldots\\
\tfrac{1}{2}& \tfrac{1}{2}  &0 &0 &0 & \ldots\\
\tfrac{1}{2}& 0  & \tfrac{1}{2} &0 &0& \ldots\\
\tfrac{1}{2}& 0  &0 &\tfrac{1}{2} &0& \ldots\\
\ldots & \ldots& \ldots &\ldots &\ldots & \ldots
\end{bmatrix},
\]
where $(a_i)_{i=1}^{\infty}$ is such that $a_i>0$, $i\in\bN$, and $\sum_{i=1}^{\infty}a_i=\tfrac{1}{2}$.  Then, there exists $g\in C(E)$ for which bounded MPE solution does not exist.
\end{example}

\begin{proof}
 The proof follow directly from the results presented in Section 9 in \cite{Cav2018} and is omitted for brevity. To see the link, observe that there exists a decreasing subsequence $(a_{i_k})_{k\in\bN}$ for which we have $a_{i_k}< \zeta(3)/(1+k)^3$, where $\zeta(3)$ is the Ap\'ery's constant and set
\[
g(x)=
\begin{cases}
2\left(\ln 2+\ln\left(1-\frac{1}{k+2} \right)\right) & \textrm{if } x=a_{i_k} \textrm{ and } k\geq N,\\
0 & \textrm{otherwise},
\end{cases}
\]
for some large constant $N\in\bN$; see Proposition 9.1 and Proposition 9.2 in \cite{Cav2018} for details. Then, one can show that constant $\lambda$ in MPE equation must depend on the state, i.e. for state 1 it must be smaller than for sufficiently large state, which would lead to contradiction, see also Proposition 9.3 and Proposition 9.4 in \cite{Cav2018}.
\end{proof}

From the examples introduced in this section one can deduce that the Markov process should propagate in a rather compact and uniform manner. The core idea behind all denumerable examples introduced in this section was to construct a process for which: (1) when starting from state $1$, it is hard to reach states associated with big integers; (2) when starting far away from $1$, it is possible to remain far away from $1$ with some geometric probability. Hopefully, this intuition could be expanded to provide in a future a full MPE existence characterisation results, with conditions possibly linked to geometric propagation as the one presented in Proposition~\ref{pr:escape.geometric}. This question remain open for the future research.

\section*{Acknowledgements}
The authors would like to express their sincere thanks to Jan Palczewski, currently at University of Leeds, for invaluable comments on the draft of this manuscript. In particular, formulation and proof concept of Proposition~\ref{pr:escape.geometric} followed from discussions with Prof. Palczewski.

\bibliographystyle{siamplain}
\bibliography{PiteraStettner2023}
\end{document}